\newtheorem{thm}{Theorem}[section]
\newtheorem{prop}[thm]{Proposition}
\newtheorem{lem}[thm]{Lemma}
\newtheorem{ex}[thm]{Example}
\newtheorem{defn}[thm]{Definition}
\theoremstyle{remark}
\newtheorem{rem}[thm]{Remark}
\begin{document}
\title{Property A and coarse embeddability for fuzzy metric spaces}
\author{Yeong Chyuan Chung}
\address{Mathematisch Instituut, Universiteit Leiden, Niels Bohrweg 1, 2333 CA Leiden, The Netherlands}
\email{y.c.chung@math.leidenuniv.nl}
\date{\today}
\subjclass[2020]{03E72, 54A40, 51F30} 
\keywords{coarse geometry, coarse topology, property A, coarse embeddability, fuzzy metric spaces}
\thanks{This research did not receive any specific grant from funding agencies in the public, commercial, or not-for-profit sectors.}
\maketitle

\begin{abstract}
Property A is a geometric property originally introduced for discrete metric spaces to provide a sufficient condition for coarse embeddability into Hilbert space, and it is defined via a F\o{}lner condition similar in spirit to the classical notion of amenability for groups.
In this paper, we define property A for fuzzy metric spaces in the sense of George and Veeramani, show that it is an invariant in the coarse category of fuzzy metric spaces, and provide characterizations of it for uniformly locally finite fuzzy metric spaces.
We also show that uniformly locally finite fuzzy metric spaces with property A are coarsely embeddable into Hilbert space.
\end{abstract}


\section{Introduction}

A form of probabilistic geometry was introduced by Menger \cite{M1,M2,M3,M4} who axiomatized the idea that the distance between two points is probabilistic rather than deterministic by replacing the numerical-valued distance between two points by a cumulative distribution function, reflecting in a way the uncertainty inherent in measurements. In fact, his definition of an ``ensemble flou'' in \cite{M4} is identical to the definition of a fuzzy set later given by Zadeh \cite{Z}, thereby anticipating the theory of fuzzy sets by more than a decade.

Menger's idea was picked up and developed by others, notably Schweizer and Sklar \cite{SS1,SS2}.
Motivated by these earlier works, Kramosil and Mich\'{a}lek introduced fuzzy metric spaces in \cite{KM}. George and Veeramani gave a modified definition in \cite{GV} such that the induced topology is Hausdorff, and this is the definition we will use in this paper.
Besides theoretical interest in the subject, such as topological and fixed point properties, fuzzy metrics have found applications in engineering problems, notably in image processing filters that produce better quality and sharpness than classical ones built using metrics \cite{GMS,LL,RKP}.

Coarse geometry can be briefly described as the study of geometric objects viewed from afar. Coarse geometric ideas were already present in Mostow's rigidity theorem \cite{Mostow} and work on growth of groups \cite{Gromov}. The subject of coarse geometry has also formed connections with Banach space theory \cite{Ost}, topology and index theory \cite{Roe,Yu98}, as well as operator algebras and noncommutative geometry \cite{Yu95,Yu00}. Most of these involve the coarse geometry of metric spaces.

Zarichnyi wrote a short note \cite{Zar} on the coarse geometry of fuzzy metric spaces, and Grzegrzolka studied the asymptotic dimension of fuzzy metric spaces in \cite{Grz}. In this paper, we take the coarse geometric study of fuzzy metric spaces a step further by studying property A and coarse embeddability for uniformly locally finite fuzzy metric spaces.
Property A is a geometric property that was introduced for discrete metric spaces in \cite{Yu00}. It is similar in spirit to, and inspired by, the classical notion of amenability for groups. Property A for a space $X$ is defined via a F\o{}lner condition on $X\times\mathbb{N}$, and is an invariant for the coarse geometry of the space. Moreover, just as there are numerous equivalent conditions for amenability, property A also comes in many guises as shown in \cite[Theorem 1.11]{Willett} and \cite[Theorem 5.1]{Sako14}.

The motivation for the introduction of Property A in \cite{Yu00} was to provide a sufficient condition for coarse embeddability into Hilbert space. It is also known that discrete, uniformly locally finite metric spaces with finite asymptotic dimension have property A \cite[Lemma 4.3]{HR}.
We show that both remain true for uniformly locally finite fuzzy metric spaces.
As a consequence, one can use Hilbert space techniques to study the large scale structure of uniformly locally finite fuzzy metric spaces with property A.

We note that Sako considered property A for uniformly locally finite coarse spaces in \cite[Definition 2.7]{Sako} using a definition equivalent to the one given by Roe in \cite[Definition 11.35]{RoeL}. The definition still makes sense without uniform local finiteness, and was considered in the doctoral thesis of Bunn \cite[Definition 5.5]{Bunn}. The definition we give for fuzzy metric spaces agrees with Sako's if one considers the coarse structure given by sets that are bounded with respect to the fuzzy metric.
It was shown in \cite[Chapter 11]{RoeL} that for uniformly locally finite coarse spaces, finite asymptotic dimension implies property A, which in turn implies coarse embeddability into Hilbert space.
Thus, the results in this paper may be seen as a special case when applied to fuzzy metric spaces with their bounded coarse structure.
Nevertheless, we have in mind an audience interested in fuzzy metric spaces and not necessarily having knowledge about coarse spaces, so the main body of this paper is written entirely in the language of fuzzy metric spaces, and we have an appendix explaining how our results can be recovered from the more general ones about coarse spaces.

We end this introduction with an outline of the rest of the paper.
In section 2, we recall basic definitions and properties pertaining to fuzzy metric spaces in the sense of George and Veeramani.
In section 3, we define property A for uniformly locally finite fuzzy metric spaces, and show that fuzzy metric spaces with finite asymptotic dimension have property A.
In section 4, we show that property A is preserved by coarse equivalences, and is in fact inherited by fuzzy metric subspaces.
In section 5, we provide characterizations of property A for uniformly locally finite fuzzy metric spaces.
In section 6, we show that uniformly locally finite fuzzy metric spaces with property A coarsely embed into Hilbert space.
Finally, we have an appendix about coarse spaces and how our results can be obtained from results about coarse spaces.

\section{Fuzzy metric spaces}

In this section, we record basic definitions and properties pertaining to fuzzy metric spaces as defined in \cite{GV}.

\begin{defn}
A binary operation \[ *:[0,1]\times[0,1]\rightarrow[0,1] \] is called a continuous $t$-norm if
\begin{enumerate}
\item $*$ is associative and commutative,
\item $*$ is continuous,
\item $a*1=a$ for all $a\in[0,1]$,
\item $a*b\leq c*d$ whenever $a\leq c$ and $b\leq d$ for $a,b,c,d\in[0,1]$.
\end{enumerate}
We will write $a^{*(m)}$ for $a*a*\cdots *a$ ($m$ times).
\end{defn}

\begin{defn}
A triple $(X,M,*)$ is a fuzzy metric space if $X$ is a set, $*$ is a continuous $t$-norm, and $M:X\times X\times(0,\infty)\rightarrow[0,1]$ is a function satisfying the following conditions for all $x,y,z\in X$ and all $s,t>0$:
\begin{enumerate}
\item $M(x,y,t)>0$,
\item $M(x,y,t)=1$ if and only if $x=y$,
\item $M(x,y,t)=M(y,x,t)$,
\item $M(x,y,t)*M(y,z,s)\leq M(x,z,t+s)$,
\item $M(x,y,\cdot):(0,\infty)\rightarrow[0,1]$ is continuous.
\end{enumerate}
\end{defn}

It was shown in \cite[Lemma 4]{Grab} that $M(x,y,\cdot)$ is non-decreasing for $x,y \in X$.

\begin{ex}
Let $(X,d)$ be a metric space, and let $a*b=ab$ for $a,b\in[0,1]$. Define \[ M(x,y,t)=\frac{t}{t+d(x,y)}. \]
Then $(X,M,*)$ is a fuzzy metric space, which we call the standard fuzzy metric space induced by the metric $d$. 
\end{ex}

\begin{ex} \cite[Example 2.11]{GV}
Let $X=\mathbb{N}$, define $a*b=ab$, and define
\[ M(x,y,t)=\begin{cases} x/y &\;\text{if}\; x\leq y \\ y/x &\;\text{if}\; y\leq x \end{cases} \] for all $t>0$.
Then $(X,M,*)$ is a fuzzy metric space. Moreover, $M$ is not the standard fuzzy metric corresponding to any metric on $X$.
\end{ex}

\begin{defn}
Let $(X,M,*)$ be a fuzzy metric space. The ball $B(x,r,t)$ with center $x\in X$, $r\in(0,1)$, and $t>0$ is defined by
\[ B(x,r,t)=\{y\in X:M(x,y,t)>1-r\}. \]
\end{defn}

If $(X,d)$ is a metric space, then we may compare balls in $(X,d)$ with balls in the corresponding standard fuzzy metric space.

\begin{lem} \label{lem:balls}
Let $(X,d)$ be a metric space, and let $(X,M,*)$ be the corresponding standard fuzzy metric space. For $R>0$, let $B_d(x,R)=\{y\in X:d(x,y)<R\}$. Then 
\[ B_d(x,R)= B\left(x,\frac{R}{t+R},t\right)=B\left(x,r,\frac{R(1-r)}{r}\right) \] for each $x\in X$, $t>0$, $r\in(0,1)$, and $R>0$.
In particular, $B_d(x,R)=B(x,\frac{1}{2},R)$ for each $x\in X$ and $R>0$.
\end{lem}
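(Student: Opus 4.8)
The plan is to collapse all three expressions to a single defining inequality and then read off the equalities by direct substitution. By definition $y \in B(x,r,t)$ precisely when $M(x,y,t) > 1-r$, and for the standard fuzzy metric this reads $\frac{t}{t+d(x,y)} > 1-r$.

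First I would solve this inequality for $d(x,y)$. Since $t>0$ and $d(x,y)\geq 0$ give $t+d(x,y)>0$, I can clear the denominator without reversing the inequality, obtaining $t > (1-r)(t+d(x,y))$, hence $rt > (1-r)\,d(x,y)$. As $r\in(0,1)$ ensures $1-r>0$, this is equivalent to $d(x,y) < \frac{rt}{1-r}$. Thus the single identity
\[ B(x,r,t) = B_d\Big(x,\tfrac{rt}{1-r}\Big) \]
holds for every $x$, every $r\in(0,1)$, and every $t>0$, and read backwards it expresses a given metric ball $B_d(x,R)$ as a fuzzy ball whenever the parameters satisfy $\frac{rt}{1-r}=R$.

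With this identity in hand, the rest is bookkeeping: I would substitute the prescribed parameters and check that the resulting radius equals $R$ in each case. Taking $r = \frac{R}{t+R}$ gives $\frac{rt}{1-r} = R$, which yields the first claimed equality; taking $t = \frac{R(1-r)}{r}$ gives $\frac{rt}{1-r} = R$ as well, yielding the second; and the special case $r=\tfrac12$, $t=R$ is simply the instance $\frac{rt}{1-r}=R$ of either formula.

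There is no real obstacle here beyond keeping the algebra straight; the only points that require a word of care are that the denominator $t+d(x,y)$ is strictly positive, so that cross-multiplication is valid, and that $1-r>0$, so that the final division preserves the inequality. Both are guaranteed by the standing hypotheses $t>0$ and $r\in(0,1)$, and neither requires any property of the $t$-norm beyond the explicit form of the standard fuzzy metric.
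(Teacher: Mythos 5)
Your proof is correct and is exactly the ``straightforward computation'' the paper alludes to but omits: you solve $\frac{t}{t+d(x,y)}>1-r$ to get the single identity $B(x,r,t)=B_d\bigl(x,\frac{rt}{1-r}\bigr)$ and then verify each stated parameter choice gives radius $R$. The care taken over $t+d(x,y)>0$ and $1-r>0$ is appropriate, and nothing further is needed.
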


\begin{proof}
The proof is a straightforward computation using the definition of the standard fuzzy metric space.
\end{proof}

\begin{defn}
Let $(X,M,*)$ be a fuzzy metric space. A subset $A$ of $X$ is bounded if there exist $r\in(0,1)$ and $t>0$ such that \[ M(x,y,t)>1-r \] for all $x,y\in A$.
\end{defn}

Throughout this paper, we will follow \cite{Grz} in assuming that $a*b\neq 0$ whenever $a\neq 0$ and $b\neq 0$, i.e., there are no zero divisors with respect to $*$. This ensures that the union of two bounded sets is bounded \cite[Proposition 2.8]{Grz}.
For example, the \L ukasiewicz $t$-norm given by $a*b=\max(a+b-1,0)$ will be excluded by this assumption.
We refer the reader to \cite[Example 2.7]{Grz} for an example of a fuzzy metric space in which the union of two bounded sets is unbounded.

With this assumption, the collection of all subsets $E$ of $X\times X$ for which there exist $r\in(0,1)$ and $t>0$ such that $M(x,y,t)>1-r$ for all $(x,y)\in E$ forms a coarse structure on $X$ in the sense of \cite[Definition 2.3]{RoeL}.

\section{Property A for fuzzy metric spaces}

In this section, we define property A for uniformly locally finite fuzzy metric spaces, and show that uniformly locally finite fuzzy metric spaces with finite asymptotic dimension have property A.
Uniform local finiteness entails a certain degree of discreteness, which we make more precise below.

\begin{defn} \label{ulfFM}
A fuzzy metric space $(X,M,*)$ is said to be uniformly locally finite if for every $r\in(0,1)$ and $t>0$, there exists $N_{r,t}>0$ such that $|B(x,r,t)|<N_{r,t}$ for all $x\in X$.
\end{defn}

Note that if a fuzzy metric space $(X,M,*)$ is uniformly locally finite, then $X$ is necessarily countable.

\begin{defn}
A uniformly locally finite fuzzy metric space $(X,M,*)$ is said to have property A if for every $\varepsilon>0$, $t>0$, and $r\in(0,1)$, there exist $t'>0$, $r'\in(0,1)$, and a family $\{A_x\}_{x\in X}$ of non-empty finite subsets of $X\times\mathbb{N}$ such that
\begin{enumerate}
\item $A_x\subset B(x,r',t')\times\mathbb{N}$ for each $x\in X$,
\item the symmetric difference $A_x\Delta A_y$ satisfes $|A_x\Delta A_y|<\varepsilon|A_x\cap A_y|$ when $M(x,y,t)>1-r$.
\end{enumerate}
\end{defn}

This definition is inspired by the original definition of property A for metric spaces in \cite{Yu00}, which we now recall.

\begin{defn}
A discrete metric space $(X,d)$ is said to have property A if for every $\varepsilon>0$ and $R>0$, there exist $S>0$ and a family $\{A_x\}_{x\in X}$ of non-empty finite subsets of $X\times\mathbb{N}$ such that
\begin{enumerate}
\item $A_x\subset B_d(x,S)\times\mathbb{N}$ for each $x\in X$,
\item the symmetric difference $A_x\Delta A_y$ satisfes $|A_x\Delta A_y|<\varepsilon|A_x\cap A_y|$ when $d(x,y)<R$.
\end{enumerate}
\end{defn}

The original definition in \cite{Yu00} had one more condition, namely that $(x,1)\in A_x$ for each $x\in X$, but we have followed subsequent authors, such as \cite[Definition 1.1]{Willett}, in dropping the condition.

One will notice that our definition of property A still makes sense without the condition of uniform local finiteness, but in Theorem \ref{charThm} one will see that this condition is used to get certain characterizations of property A.
Moreover, this condition is often assumed in the context of metric spaces, and is also assumed in the definition of property A for coarse spaces in \cite{Sako}.

\begin{prop}
Let $(X,d)$ be a uniformly locally finite metric space, and let $(X,M,*)$ be the corresponding standard fuzzy metric space. Then $(X,d)$ has property A if and only if $(X,M,*)$ has property A.
\end{prop}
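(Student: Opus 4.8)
The plan is to turn Lemma \ref{lem:balls} into an exact dictionary between metric balls and fuzzy balls, and then observe that the two notions of property A are literally the same condition written with differently-named scale parameters. Rearranging the identities in Lemma \ref{lem:balls}, one solves $t=\frac{R(1-r)}{r}$ for $R$ to obtain
\[
B(x,r,t)=B_d\!\left(x,\tfrac{tr}{1-r}\right)
\]
for every $x\in X$, $r\in(0,1)$, $t>0$, and conversely every metric ball satisfies $B_d(x,S)=B\!\left(x,\tfrac12,S\right)$ (and more generally $B_d(x,S)=B\!\left(x,\tfrac{S}{t'+S},t'\right)$ for any $t'>0$). Since by definition the inequality $M(x,y,t)>1-r$ is exactly the statement $y\in B(x,r,t)$, it is precisely equivalent to $d(x,y)<\tfrac{tr}{1-r}$. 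I would first record as a preliminary remark that, because each fuzzy ball is a metric ball and vice versa, uniform local finiteness of $(X,d)$ and of $(X,M,*)$ coincide, so both definitions of property A are applicable.

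For the forward implication, suppose $(X,d)$ has property A and fix $\varepsilon>0$, $t>0$, and $r\in(0,1)$. Setting $R=\tfrac{tr}{1-r}$ and applying metric property A yields $S>0$ and a family $\{A_x\}$ with $A_x\subset B_d(x,S)\times\mathbb{N}$ and the symmetric-difference estimate holding whenever $d(x,y)<R$. The estimate then holds exactly when $M(x,y,t)>1-r$, by the equivalence above; and taking $t'=S$, $r'=\tfrac12$ rewrites $B_d(x,S)$ as $B(x,r',t')$, so condition (i) of the fuzzy definition is met. This furnishes the data required by the fuzzy definition.

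For the converse, suppose $(X,M,*)$ has property A and fix $\varepsilon>0$ and $R>0$. Choosing $t=R$ and $r=\tfrac12$, the condition $M(x,y,t)>1-r$ becomes exactly $d(x,y)<R$, so applying fuzzy property A gives $t'$, $r'$, and a family $\{A_x\}$ with $A_x\subset B(x,r',t')\times\mathbb{N}$ and the estimate holding whenever $d(x,y)<R$. Rewriting $B(x,r',t')=B_d\!\left(x,\tfrac{t'r'}{1-r'}\right)$ and putting $S=\tfrac{t'r'}{1-r'}$ recovers the metric support condition, completing this direction.

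Because Lemma \ref{lem:balls} gives an exact equality of balls rather than a mere comparison, there is no analytic content to overcome and no distortion to track; the proof is a bookkeeping argument. The only point requiring care is to identify the correct substitution $R=\tfrac{tr}{1-r}$ together with its inverse and to check that each clause of one definition maps precisely onto its counterpart in the other, which Lemma \ref{lem:balls} supplies directly.
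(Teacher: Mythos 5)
Your proposal is correct and follows essentially the same route as the paper: both directions rest on Lemma \ref{lem:balls} with the same substitutions ($R=\tfrac{tr}{1-r}$ in the forward direction; $t=R$, $r=\tfrac12$ in the converse), so the argument reduces to the same dictionary between metric and fuzzy balls. Your observation that the lemma gives an exact \emph{equality} $B(x,r,t)=B_d\bigl(x,\tfrac{tr}{1-r}\bigr)$, rather than just the containments the paper writes down, is a minor sharpening but does not change the proof in substance.
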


\begin{proof}
We will use Lemma \ref{lem:balls} to compare balls in $(X,d)$ with balls in $(X,M,*)$.

Suppose $(X,d)$ has property A.
Given $\varepsilon>0$, $t>0$, and $r\in(0,1)$, let $R=\frac{tr}{1-r}$.  
There exist $S>0$ and a family $\{A_x\}_{x\in X}$ of non-empty finite subsets of $X\times\mathbb{N}$ such that $A_x\subset B_d(x,S)\times\mathbb{N}\subseteq B(x,\frac{S}{t+S},t)\times\mathbb{N}$ for each $x\in X$, and $|A_x\Delta A_y|<\varepsilon|A_x\cap A_y|$ whenever $y\in B_d(x,R)$ and thus whenever $y\in B(x,r,t)$.
Hence, $(X,M,*)$ has property A.

Conversely, suppose $(X,M,*)$ has property A.
Given $R>0$ and $\varepsilon>0$, let $t=R$ and $r=\frac{1}{2}$.
There exist $t'>0$, $r'\in(0,1)$, and a family $\{A_x\}_{x\in X}$ of non-empty finite subsets of $X\times\mathbb{N}$ such that $A_x\subset B(x,r',t')\times\mathbb{N}\subseteq B_d(x,\frac{t'r'}{1-r'})\times\mathbb{N}$ for each $x\in X$, and $|A_x\Delta A_y|<\varepsilon|A_x\cap A_y|$ whenever $M(x,y,t)>1-r=\frac{1}{2}$ and thus whenever $d(x,y)<R$.
\end{proof}

Our definition is compatible with Sako's definition of property A for general coarse spaces in \cite[Definition 2.7]{Sako} if one considers the coarse structure given by the collection of all subsets $E$ of $X\times X$ for which there exist $r\in(0,1)$ and $t>0$ such that $M(x,y,t)>1-r$ for all $(x,y)\in E$. We refer the reader to Proposition \ref{Aequiv} for a proof of this statement.

We now proceed to show that fuzzy metric spaces with finite asymptotic dimension have property A, and we begin by recalling the definition of asymptotic dimension for fuzzy metric spaces from \cite{Grz}.

\begin{defn}
A family $\mathcal{U}$ of subsets of a fuzzy metric space $(X,M,*)$ is uniformly bounded if there exist $r\in(0,1)$ and $t>0$ such that $M(x,y,t)>1-r$ for all $x,y\in U$ and $U\in\mathcal{U}$.
\end{defn}

\begin{defn}
Two subsets $U$ and $U'$ of a fuzzy metric space $(X,M,*)$ are $(r,t)$-disjoint for some $r\in(0,1)$ and $t>0$ if $M(U,U',t)<1-r$, where $M(U,U',t)=\sup\{M(x,y,t):x\in U,y\in U'\}$.

A family $\mathcal{U}$ of subsets of $(X,M,*)$ is said to be $(r,t)$-disjoint if any two distinct members of $\mathcal{U}$ are $(r,t)$-disjoint.
\end{defn}

\begin{defn}
Let $(X,M,*)$ be a fuzzy metric space. We say that the asymptotic dimension of $X$ does not exceed $n$, and write $asdim(X)\leq n$, if for every $r\in(0,1)$ and $t>0$, there exist $(r,t)$-disjoint families $\mathcal{U}^0,\ldots,\mathcal{U}^n$ of subsets of $X$ such that $\bigcup_i\mathcal{U}^i$ is a uniformly bounded cover of $(X,M,*)$. If no such $n$ exists, we say that $asdim(X)=\infty$.
If $asdim(X)\leq n$ but $asdim(X)\nleq n-1$, then we say that $asdim(X)=n$.
\end{defn}

\begin{defn}
A Lebesgue pair of a cover $\mathcal{U}$ of a fuzzy metric space $(X,M,*)$ is a pair of numbers $r\in(0,1)$ and $t>0$ such that for any $x\in X$, $B(x,r,t)\subseteq U$ for some $U\in\mathcal{U}$.

The multiplicity of $\mathcal{U}$ is the smallest integer $n$ such that every point $x\in X$ is contained in at most $n$ members of $\mathcal{U}$.
\end{defn}

\begin{lem} \cite[Theorem 4.5]{Grz}
Let $(X,M,*)$ be a fuzzy metric space. If $asdim(X)\leq n$, then for every $r\in(0,1)$ and $t>0$, there exists a uniformly bounded cover $\mathcal{U}$ of $X$ with Lebesgue pair $(r,t)$ and multiplicity at most $n+1$.
\end{lem}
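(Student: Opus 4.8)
The plan is to adapt the standard argument from metric asymptotic dimension theory, in which one passes from uniformly bounded disjoint families to a uniformly bounded cover of controlled multiplicity by \emph{thickening} each member of the disjoint families, i.e.\ replacing it by a neighborhood. The only genuinely new ingredient is to translate ``distance between sets'' and ``enlargement by a fixed radius'' into the fuzzy setting, where each is governed by a pair $(r,t)$ rather than a single number, and where the triangle inequality is multiplicative through the $t$-norm.

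Fix the desired Lebesgue pair $(r,t)$. First I would select the scale at which to invoke the hypothesis. Put $t'=2t$ and $r'=1-(1-r)^{*(2)}$; since $r<1$ we have $1-r>0$, and as $*$ has no zero divisors, $(1-r)^{*(2)}>0$, so indeed $r'\in(0,1)$. Applying $asdim(X)\le n$ with the pair $(r',t')$ produces $(r',t')$-disjoint families $\mathcal{V}^0,\dots,\mathcal{V}^n$ whose union is a uniformly bounded cover of $X$. For each member $V$ of each family I would form its $(r,t)$-neighborhood $N(V)=\bigcup_{w\in V}B(w,r,t)$, and set $\mathcal{U}=\{N(V):V\in\mathcal{V}^i,\ 0\le i\le n\}$.

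It then remains to verify the three required properties. Since $w\in B(w,r,t)$, we have $V\subseteq N(V)$, so $\mathcal{U}$ is a cover, and given any $x\in X$ lying in some $V$ we get $B(x,r,t)\subseteq N(V)\in\mathcal{U}$, which is exactly the statement that $(r,t)$ is a Lebesgue pair. For the multiplicity bound I would argue that within a single family the thickened sets are pairwise disjoint, so that any point lies in at most one member per family and hence in at most $n+1$ members of $\mathcal{U}$ in total. Indeed, if $z\in N(V)\cap N(V')$ with $V,V'$ distinct members of $\mathcal{V}^i$, then there are $w\in V$ and $w'\in V'$ with $M(w,z,t)>1-r$ and $M(z,w',t)>1-r$, whence
\[
M(w,w',t')=M(w,w',2t)\ge M(w,z,t)*M(z,w',t)\ge (1-r)^{*(2)}=1-r',
\]
contradicting the $(r',t')$-disjointness $M(V,V',t')<1-r'$. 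Finally, uniform boundedness of $\mathcal{U}$ follows from the analogous three-term estimate: if $(r_1,t_1)$ bounds the original families and $z,z'\in N(V)$, choosing $w,w'\in V$ with $z\in B(w,r,t)$ and $z'\in B(w',r,t)$ gives $M(z,z',2t+t_1)\ge (1-r)*(1-r_1)*(1-r)>0$, a positive lower bound independent of $V$, so a single pair $(r_2,t_2)$ bounds every thickened set.

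I expect the main difficulty to be the bookkeeping of parameters rather than any conceptual hurdle: at each step one must confirm that the product, under $*$, of the quantities of the form $1-r$ stays strictly positive and above the relevant threshold, and this is precisely where the no-zero-divisor hypothesis on $*$ and the monotonicity of the $t$-norm together with the fuzzy triangle inequality come into play. A secondary point to treat with care is the direction of the strict inequality in the definition of $(r,t)$-disjointness via the supremum $M(U,U',t)$, so that the bound $M(w,w',t')\ge 1-r'$ obtained above genuinely contradicts $M(V,V',t')<1-r'$.
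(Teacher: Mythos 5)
Your proof is correct: the parameter choice $t'=2t$, $r'=1-(1-r)^{*(2)}$ (valid since the no-zero-divisor hypothesis gives $(1-r)^{*(2)}>0$), the thickening $N(V)=\bigcup_{w\in V}B(w,r,t)$, and the three verifications (cover with Lebesgue pair, per-family disjointness of thickened sets via the fuzzy triangle inequality against the supremum-based disjointness, and uniform boundedness via the three-term estimate) all go through as written. The paper does not prove this lemma itself but cites \cite[Theorem 4.5]{Grz}, and your argument is essentially the standard thickening proof from metric asymptotic dimension theory adapted to the fuzzy setting, which is the same route taken in that reference.
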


The main result of this section is analogous to \cite[Theorem 2.1]{CDV}, and shows that fuzzy metric spaces with finite asymptotic dimension have property A.

\begin{thm} \label{thm:asdim}
Let $(X,M,*)$ be a uniformly locally finite fuzzy metric space, and $n\geq 0$. 
If $asdim(X)\leq n$, then for each $\varepsilon>0$, $t>0$, and $r\in(0,1)$, there exist $t'>0$, $r'\in(0,1)$, and non-empty finite subsets $A_x\subset B(x,r',t')\times\mathbb{N}$ for $x\in X$ such that $|A_x\Delta A_y|<\varepsilon|A_x\cap A_y|$ when $M(x,y,t)>1-r$, and the projection of $A_x$ onto $X$ contains at most $n+1$ elements for each $x\in X$.
In particular, fuzzy metric spaces with finite asymptotic dimension have property A.
\end{thm}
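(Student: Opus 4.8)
The plan is to adapt the proof of \cite[Theorem 2.1]{CDV}, the essential new ingredient being a purely combinatorial notion of ``depth'' that sidesteps the absence of a real-valued distance. Fix $\varepsilon>0$, $t>0$, and $r\in(0,1)$. For $k\in\mathbb{N}$ and $x\in X$ let $N_k(x)$ denote the set of $z\in X$ admitting a chain $x=z_0,z_1,\dots,z_k=z$ with $M(z_{i-1},z_i,t)>1-r$ for each $i$. Iterating the fuzzy triangle inequality and using the monotonicity of $*$ gives $M(x,z,kt)\ge (1-r)^{*(k)}$ for every $z\in N_k(x)$, so that $N_k(x)\subseteq B(x,\rho,kt)$ whenever $\rho>1-(1-r)^{*(k)}$; the no-zero-divisor assumption guarantees $(1-r)^{*(k)}>0$, so such admissible $\rho$ exist in $(0,1)$. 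Since $M(x,x,t)=1$, the sets are nested, $\{x\}=N_0(x)\subseteq N_1(x)\subseteq\cdots$, and if $M(x,y,t)>1-r$ then $y\in N_1(x)$ and $N_{k-1}(y)\subseteq N_k(x)$ for all $k\ge 1$.

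Now I would fix a large integer $m$ (to be chosen at the end) and a threshold $\rho'\in\big(1-(1-r)^{*(m)},\,1\big)$, which is a genuine subinterval of $(0,1)$ precisely because $(1-r)^{*(m)}>0$. Applying \cite[Theorem 4.5]{Grz} with the Lebesgue pair $(\rho',mt)$ produces a uniformly bounded cover $\mathcal{U}$ of multiplicity at most $n+1$; let $(r',t')$ witness its uniform boundedness. For each $U\in\mathcal{U}$ choose a representative $x_U\in U$, and for $x\in X$ set
\[ \varphi_U(x)=\max\{k\in\{0,1,\dots,m\}:N_k(x)\subseteq U\},\qquad A_x=\{(x_U,j):x\in U,\ 1\le j\le\varphi_U(x)\}. \]
Then the projection of $A_x$ onto $X$ is contained in $\{x_U:x\in U\}$, which has at most $n+1$ elements, and since $x_U$ and $x$ lie in the common member $U$ we have $M(x,x_U,t')>1-r'$, so $A_x\subseteq B(x,r',t')\times\mathbb{N}$; this gives condition (i).

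For the F\o{}lner-type estimate I would record three facts, valid whenever $M(x,y,t)>1-r$. First, as $N_m(x)\subseteq B(x,\rho',mt)\subseteq U$ for some $U$ by the Lebesgue property, every $x$ has $\varphi_U(x)=m$ for some $U$, whence $A_x\neq\emptyset$ and $|A_x|\ge m$. Second, if $x\in U$ but $y\notin U$, then $y\in N_1(x)\subseteq N_k(x)$ forces $N_k(x)\not\subseteq U$ for all $k\ge1$, so $\varphi_U(x)=0$; thus members containing exactly one of $x,y$ contribute nothing. Third, if $x,y\in U$ then the inclusions $N_{k-1}(y)\subseteq N_k(x)$ and $N_{k-1}(x)\subseteq N_k(y)$ give $|\varphi_U(x)-\varphi_U(y)|\le 1$. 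Summing the contributions over the at most $n+1$ members of $\mathcal{U}$ that contain both $x$ and $y$ yields $|A_x\Delta A_y|\le n+1$, while $|A_x\cap A_y|\ge|A_x|-|A_x\Delta A_y|\ge m-(n+1)$. Choosing $m>(n+1)(1+\varepsilon^{-1})$ makes the ratio $|A_x\Delta A_y|/|A_x\cap A_y|$ smaller than $\varepsilon$, establishing condition (ii) together with the multiplicity bound. Discarding the bound on the projection gives the ``in particular'' statement.

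\textbf{Main obstacle.} The delicate point is entirely fuzzy-theoretic and occurs at the outset: a naive depth defined by how large a \emph{single-scale} ball $B(x,\rho,\cdot)$ fits inside $U$ degrades uncontrollably, because the fuzzy triangle inequality replaces the addition of distances by the $t$-norm, and the inequality $a*b\le b$ forces the threshold to deteriorate with every comparison. The chain neighborhoods $N_k$ are designed so that this deterioration is booked once and for all into the quantity $(1-r)^{*(k)}$, matched step-for-step with the combinatorial count $k$; the role of the no-zero-divisor hypothesis is exactly to keep $(1-r)^{*(m)}$ positive, so that the Lebesgue threshold $\rho'$ may be taken below $1$ and Grzegrzolka's covering lemma applied at the top scale $mt$. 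Once this translation is in place, the fluctuation estimate $|\varphi_U(x)-\varphi_U(y)|\le1$ and the remaining counting are as clean as in the metric case.
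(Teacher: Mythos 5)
Your proposal is correct and takes essentially the same route as the paper's own proof: both apply Grzegrzolka's covering lemma at a chain-amplified scale with threshold controlled by $(1-r)^{*(m)}$, build tower sets $A_x$ over chosen representatives whose heights measure chain-distance to the complement of each cover member, and conclude from the fluctuation bound ($|\varphi_U(x)-\varphi_U(y)|\le 1$, the paper's $|l_U(x)-l_U(y)|\le 1$), the Lebesgue pair, and the multiplicity bound. The differences are cosmetic: your $\varphi_U$ is the paper's $l_U$ shifted by one (so members containing exactly one of $x,y$ contribute $0$ rather than $1$, yielding $|A_x\Delta A_y|\le n+1$ instead of $2n+1$), you bound $|A_x\cap A_y|\ge |A_x|-|A_x\Delta A_y|$ combinatorially rather than exhibiting the explicit common column $\{a_{U_0}\}\times\{1,\dots,\lfloor 2+(2n+1)/\varepsilon\rfloor-1\}$, and you defer the choice of $m$ to the end rather than fixing $\lfloor 2+(2n+1)/\varepsilon\rfloor$ up front.
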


\begin{proof}
Assume $asdim(X)\leq n$, and fix $\varepsilon>0$, $t>0$, and $r\in(0,1)$.
Let $\mathcal{U}$ be a uniformly bounded cover of $X$ with multiplicity at most $n+1$ and with Lebesgue pair $(R,T)$, where $R=1-(1-r)^{*(\lfloor 2+(2n+1)/\varepsilon \rfloor+1)}\in(0,1)$ and $T=(2+\frac{2n+1}{\varepsilon})t$.
Here, $\lfloor 2+\frac{2n+1}{\varepsilon} \rfloor$ denotes the integer part of $2+\frac{2n+1}{\varepsilon}$.
Note that $1-R\leq 1-r$ and $t<T$, so $B(x,r,t)\subseteq B(x,R,T)$ for each $x\in X$.

Since $\mathcal{U}$ is uniformly bounded, there exist $r'\in(0,1)$ and $t'>0$ such that $M(x,y,t')>1-r'$ for all $x,y\in U$ and $U\in\mathcal{U}$. Pick an element $a_U\in U$ for each $U\in\mathcal{U}$.
We call a finite sequence $x_0,\ldots,x_n$ of points in $X$ an $(r,t)$-chain from $x_0$ to $x_n$ if $M(x_i,x_{i-1},t)>1-r$ for $i=1,\ldots,n$.
For $x\in X$ and $U\in\mathcal{U}$, let $l_U(x)$ denote the length of the shortest $(r,t)$-chain from $x$ to a point outside $U$. If there is no such chain, set $l_U(x)$ equal to $\lfloor 2+\frac{2n+1}{\varepsilon} \rfloor$.
Now let \[ A_x=\bigcup_{U\ni x}\{a_U\}\times\{1,\ldots,l_U(x)\}. \]
This is a finite set for each $x\in X$ since the cover $\mathcal{U}$ has finite multiplicity.
If $x\in U$, then $M(x,a_U,t')>1-r'$, so $A_x\subset B(x,r',t')\times\mathbb{N}$ for each $x\in X$.
Since $\mathcal{U}$ has multiplicity at most $n+1$, the projection of $A_x$ onto $X$ contains at most $n+1$ elements for each $x\in X$.

Assume $M(x,y,t)>1-r$ for the rest of the proof. 
If $y,y_1,\ldots,y_k$ forms an $(r,t)$-chain, then so does $x,y,y_1,\ldots,y_k$.
If there is no $(r,t)$-chain from $x$ to $X\setminus U$, then there is also no $(r,t)$-chain from $y$ to $X\setminus U$. 
Thus, $|l_U(x)-l_U(y)|\leq 1$.
Since $\mathcal{U}$ has Lebesgue pair $(R,T)$ and multiplicity at most $n+1$, there are at most $2n+1$ elements of $\mathcal{U}$ containing either $x$ or $y$.
Moreover, if $x\in U$ and $y\notin U$, then $l_U(x)=1$ since $x,y$ form an $(r,t)$-chain. 
Thus, \[|A_x\Delta A_y|\leq 2n+1.\]

By definition of a Lebesgue pair, there exists $U_0\in\mathcal{U}$ such that $B(x,R,T)\subset U_0$.
If $x,x_1,\ldots,x_m$ forms an $(r,t)$-chain, and $x_m\in X\setminus U_0$, then $M(x,x_m,T)\leq 1-R$. Now if $m<\lfloor 2+\frac{2n+1}{\varepsilon} \rfloor$, then
\begin{align*}
1-R\geq M(x,x_m,T) &= M(x,x_m,(2+\frac{2n+1}{\varepsilon})t) \\
&\geq M(x,x_m,mt) \\ 
&\geq M(x,x_1,t)*M(x_1,x_2,t)*\cdots*M(x_{m-1},x_m,t) \\
&\geq (1-r)^{*(m)}.
\end{align*}
Since $1-R=(1-r)^{*(\lfloor 2+(2n+1)/\varepsilon \rfloor+1)}$, we get $\lfloor 2+\frac{2n+1}{\varepsilon} \rfloor+1\leq m<\lfloor 2+\frac{2n+1}{\varepsilon} \rfloor$, which is a contradiction.
Hence, $m\geq\lfloor 2+\frac{2n+1}{\varepsilon} \rfloor$.

Thus, $\{a_{U_0}\}\times\{1,\ldots,\lfloor 2+\frac{2n+1}{\varepsilon} \rfloor-1\}\subseteq A_x\cap A_y$, and 
\[|A_x\cap A_y|\geq \lfloor 2+\frac{2n+1}{\varepsilon} \rfloor-1>\frac{2n+1}{\varepsilon}.\]
Consequently, \[ \frac{|A_x\Delta A_y|}{|A_x\cap A_y|}<\frac{2n+1}{(2n+1)/\varepsilon}=\varepsilon. \]

\end{proof}

The fuzzy metric space in the next example is not the standard fuzzy metric space corresponding to any metric, and it was shown to have asymptotic dimension zero in \cite[Example 6.3]{Grz}. It has property A by the theorem above but we can also show this directly using the definition.

\begin{ex}
Let $X=\mathbb{N}$, define $a*b=ab$, and define
\[ M(x,y,t)=\begin{cases} 1 &\;\text{if}\; x= y \\ 1/xy &\;\text{if}\; x\neq y \end{cases} \] for all $t>0$.
Given $\varepsilon>0$, $r\in(0,1)$, and $t>0$, choose $N\in\mathbb{N}$ such that $\frac{1}{N+1}<1-r$.
For $n\in\mathbb{N}$, set \[ A_n=\begin{cases} \{N\}\times\{1\} &\;\text{if}\; n\leq N \\ \{n\}\times\{1\} &\;\text{if}\; n>N \end{cases} \]
For $n<N$, we have $M(n,N,t)=\frac{1}{nN}>\frac{1}{N^2}$, and it follows that $A_n\subset B(n,1-\frac{1}{N^2},t)\times\mathbb{N}$ for all $n\in\mathbb{N}$.
If $M(n,m,t)>1-r$ and $m\neq n$, then $\frac{1}{mn}>1-r>\frac{1}{N+1}$, so $A_n=A_m=\{N\}\times\{1\}$, and it follows that $|A_n\Delta A_m|<\varepsilon|A_n\cap A_m|$ whenever $M(n,m,t)>1-r$.
Hence, $(X,M,*)$ has property A.
\end{ex}

Dranishnikov \cite[Section 4]{Dra} gave examples of metric spaces with both infinite asymptotic dimension and property A, so the standard fuzzy metric spaces corresponding to these metric spaces also have both properties.

\section{Coarse invariance}

In this section, we show that property A is preserved by coarse equivalences. In fact, it is a property that is inherited by fuzzy metric subspaces.

\begin{defn} \label{ceFM}
Let $(X,M_1,*_1)$ and $(Y,M_2,*_2)$ be fuzzy metric spaces. Let $f:X\rightarrow Y$ be a map.
\begin{enumerate}
\item $f$ is said to be uniformly expansive if for all $A>0$ and $t>0$, there exist $B\in(0,1)$ and $t'>0$ such that $M_2(f(x),f(x'),t')\geq B$ whenever $M_1(x,x',t)\geq A$ for $x,x'\in X$.
\item $f$ is said to be effectively proper if for all $C>0$ and $t>0$, there exist $D\in(0,1)$ and $t'>0$ such that $M_1(x,x',t')\geq D$ whenever $M_2(f(x),f(x'),t)\geq C$ for $x,x'\in X$.
\item $f$ is a coarse embedding if it is both uniformly expansive and effectively proper.
\item $f$ is a coarse equivalence if it is a coarse embedding and it is coarsely onto in the sense that there exist $r\in(0,1)$ and $t>0$ such that for each $y\in Y$ there exists $x\in X$ satisfying $M_2(f(x),y,t)>1-r$.
\end{enumerate}
\end{defn}

\begin{defn}
Let $X$ be a set, and let $(Y,M,*)$ be a fuzzy metric space. Let $f:X\rightarrow Y$ and $g:X\rightarrow Y$ be functions. Then $f$ is close to $g$, denoted $f\sim g$, if there exist $r\in(0,1)$ and $t>0$ such that $M(f(x),g(x),t)>1-r$ for all $x\in X$.
\end{defn}

\begin{prop} \cite[Proposition 5.4]{Grz}
Let $(X,M_1,*_1)$ and $(Y,M_2,*_2)$ be fuzzy metric spaces. Let $f:X\rightarrow Y$ be a function. Then $f$ is a coarse equivalence if and only if $f$ is uniformly expansive and there exists a uniformly expansive $g:Y\rightarrow X$ such that the compositions $f\circ g$ and $g\circ f$ are close to the identity maps of $Y$ and $X$ respectively. 

The function $g$ is called a coarse inverse of $f$.
\end{prop}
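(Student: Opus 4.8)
The plan is to prove both implications, constructing the coarse inverse by hand for the forward direction and verifying the three defining conditions of a coarse equivalence in Definition \ref{ceFM} for the backward direction. The key tool throughout is the triangle inequality $M(x,y,t)*M(y,z,s)\le M(x,z,t+s)$, iterated across chains of points and combined with the monotonicity of $M(x,y,\cdot)$ and the standing no-zero-divisors assumption, which guarantees that a $*$-product of finitely many positive quantities is again positive. I expect the bookkeeping of the auxiliary time parameters, rather than any conceptual difficulty, to be the main obstacle.

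For the forward direction, assume $f$ is a coarse equivalence; then $f$ is uniformly expansive by definition, so it remains to construct $g$. Since $f$ is coarsely onto, there exist $r\in(0,1)$ and $t>0$ so that for each $y\in Y$ one may choose $x\in X$ with $M_2(f(x),y,t)>1-r$; I fix such a choice and call it $g(y)$. By construction $M_2(f(g(y)),y,t)>1-r$ for every $y$, so $f\circ g\sim\mathrm{id}_Y$. Applying this with $y=f(x)$ gives $M_2(f(g(f(x))),f(x),t)>1-r$, and effective properness of $f$ then yields $D\in(0,1)$ and $t'>0$, independent of $x$, with $M_1(g(f(x)),x,t')\ge D$; hence $g\circ f\sim\mathrm{id}_X$. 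Finally, to see that $g$ is uniformly expansive, suppose $M_2(y,y',t_1)\ge A$. Replacing $y,y'$ by $f(g(y)),f(g(y'))$ through the triangle inequality, using the two closeness estimates from $f\circ g\sim\mathrm{id}_Y$, produces a lower bound $M_2(f(g(y)),f(g(y')),t_2)\ge A'$ with $A'>0$ by the no-zero-divisors assumption, and effective properness of $f$ converts this into the desired $M_1(g(y),g(y'),t_3)\ge B$.

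For the backward direction, assume $f$ is uniformly expansive and that such a $g$ exists. Coarse ontoness is immediate: from $f\circ g\sim\mathrm{id}_Y$ there are $r,t$ with $M_2(f(g(y)),y,t)>1-r$, so $x=g(y)$ witnesses the condition for each $y$. For effective properness, suppose $M_2(f(x),f(x'),t)\ge C$. Uniform expansiveness of $g$ gives $M_1(g(f(x)),g(f(x')),t'')\ge D$, and chaining this with the two closeness estimates from $g\circ f\sim\mathrm{id}_X$ yields
\[ M_1(x,x',2t_0+t'')\ge M_1(x,g(f(x)),t_0)*M_1(g(f(x)),g(f(x')),t'')*M_1(g(f(x')),x',t_0), \]
whose right-hand side is bounded below by a positive constant, again by the absence of zero divisors. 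Since $f$ is uniformly expansive by hypothesis, all three conditions of Definition \ref{ceFM} hold and $f$ is a coarse equivalence.

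The only delicate point is that each application of the triangle inequality forces the time argument to grow (from a single $t$ to a sum of times) while the value is controlled by a $*$-product; one must therefore check at every step that the resulting time is admissible and, crucially, that the $*$-product of the finitely many positive lower bounds stays positive. This is precisely where the hypothesis that $*$ has no zero divisors is used, and it is the same mechanism that makes closeness an equivalence relation stable under composition with uniformly expansive maps.
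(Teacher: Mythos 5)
Your proof is correct: both directions are complete, the bookkeeping of time parameters under the triangle inequality is handled properly, and you correctly isolate the role of the no-zero-divisors assumption in keeping the $*$-products $(1-r)*A*(1-r)$ and $(1-r_0)*D'*(1-r_0)$ strictly positive so that they serve as admissible lower bounds in $(0,1)$. The paper itself gives no proof, quoting the result from \cite[Proposition 5.4]{Grz}, and your argument is essentially the standard one from that source --- the fuzzy-metric analogue of the familiar metric-space characterization of coarse equivalences via coarse inverses --- so there is nothing to add.
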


\begin{thm} \label{invar}
Let $(X,M_1,*_1)$ and $(Y,M_2,*_2)$ be uniformly locally finite fuzzy metric spaces. Let $f:X\rightarrow Y$ be a coarse equivalence. Then $(X,M_1,*_1)$ has property A if and only if $(Y,M_2,*_2)$ does.
\end{thm}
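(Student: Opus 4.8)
The plan is to prove that property A passes through the coarse equivalence $f$ by transporting a family witnessing property A on $Y$ back to $X$. First I would record that the statement is symmetric: by the preceding proposition, $f$ admits a coarse inverse $g\colon Y\to X$, which is itself a coarse equivalence (with coarse inverse $f$), so it suffices to show that if $(Y,M_2,*_2)$ has property A then so does $(X,M_1,*_1)$, the reverse implication following by interchanging the roles of $f$ and $g$.

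So suppose $Y$ has property A, and fix $\varepsilon>0$, $t>0$, and $r\in(0,1)$ for $X$. Since $f$ is uniformly expansive, there exist $t_2>0$ and $r_2\in(0,1)$ such that $M_1(x,x',t)>1-r$ implies $M_2(f(x),f(x'),t_2)>1-r_2$. Applying property A of $Y$ to $\varepsilon$, $t_2$, and $r_2$ yields $t_2'>0$, $r_2'\in(0,1)$, and a family $\{B_y\}_{y\in Y}$ of non-empty finite subsets of $Y\times\mathbb{N}$ with $B_y\subset B(y,r_2',t_2')\times\mathbb{N}$ and $|B_y\Delta B_{y'}|<\varepsilon|B_y\cap B_{y'}|$ whenever $M_2(y,y',t_2)>1-r_2$. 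The natural candidate is to push $B_{f(x)}$ back to $X$ by applying $g$ in the first coordinate.

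The hard part will be that $g$ need not be injective, so applying it coordinatewise could collapse cardinalities and destroy the symmetric-difference estimate. The key device is to replace $g\times\mathrm{id}_{\mathbb{N}}$ by an honestly injective relabeling. Since both spaces are uniformly locally finite, they are countable, so for each $x\in X$ the set $g^{-1}(x)\times\mathbb{N}$ is countable and admits an injection into $\mathbb{N}$; assembling these gives a map $p\colon Y\times\mathbb{N}\to\mathbb{N}$ for which $G(y,n)=(g(y),p(y,n))$ is injective. I would then set $A_x=G(B_{f(x)})$, which is again non-empty and finite. Because $G$ is injective, it commutes with intersections and symmetric differences and preserves cardinalities, so $|A_x\Delta A_{x'}|=|B_{f(x)}\Delta B_{f(x')}|$ and $|A_x\cap A_{x'}|=|B_{f(x)}\cap B_{f(x')}|$, whence condition (ii) for $X$ follows directly from condition (ii) for $Y$ applied to $f(x),f(x')$.

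It remains to verify the support condition (i). If $(y,n)\in B_{f(x)}$ then $M_2(f(x),y,t_2')>1-r_2'$, so uniform expansiveness of $g$ gives $t''>0$ and $B\in(0,1)$ with $M_1(g(f(x)),g(y),t'')\geq B$; combining this via the fuzzy triangle inequality with the estimate $M_1(x,g(f(x)),t_0)>1-r_0$ coming from $g\circ f\sim\mathrm{id}_X$, and using that $*_1$ has no zero divisors, yields $M_1(x,g(y),t_0+t'')\geq(1-r_0)*_1 B>0$. Choosing $r'\in(0,1)$ with $1-r'<(1-r_0)*_1 B$ and $t'=t_0+t''$ gives $A_x\subset B(x,r',t')\times\mathbb{N}$, completing the verification. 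The only genuinely delicate point is the injective relabeling $G$, which is exactly where uniform local finiteness (through countability) enters.
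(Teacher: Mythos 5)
Your proposal is correct, and at the top level it mirrors the paper's proof: only one implication is argued directly, the other following from the symmetry of the coarse-inverse characterization, and the witnessing family is transported by precomposing with one map of the pair $(f,g)$ and pushing forward along the other in the first coordinate, with uniform expansiveness plus closeness of $g\circ f$ (resp. $f\circ g$) to the identity giving the support condition exactly as you outline. The genuine difference is the device used where the transporting map fails to be injective. The paper pushes $A_{g(y)}$ forward along $f$ and repairs the collapse by counting multiplicities: it sets $n_{y'}=|(f^{-1}(y')\times\mathbb{N})\cap A_{g(y)}|$ and relabels the fiber contribution as $\{(y',1),\ldots,(y',n_{y'})\}$; this preserves the total cardinality $|B_y|=|A_{g(y)}|$ but yields only the one-sided estimates $|A_{g(y)}\cap A_{g(y')}|\leq|B_y\cap B_{y'}|$ and $|A_{g(y)}\Delta A_{g(y')}|\geq|B_y\Delta B_{y'}|$, which happen to point the right way for the F\o{}lner-type ratio. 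Your fixed global injection $G(y,n)=(g(y),p(y,n))$ instead commutes with intersections and symmetric differences, so all cardinalities are preserved exactly and condition (ii) transfers with no inequality-chasing; the price is that you must know each set $g^{-1}(x)\times\mathbb{N}$ embeds in $\mathbb{N}$, which you correctly extract from countability of $Y$ (guaranteed here by uniform local finiteness, as the paper notes). The paper's counting trick needs no such cardinality input, which is why the identical argument is repeated verbatim for general uniformly locally finite coarse spaces in the appendix; your version would adapt there too, but only after replacing countability of $Y$ by properness of $g$ to control the fibers. One elided piece of bookkeeping in your write-up --- uniform expansiveness yields conclusions of the form $M\geq B$, which must be converted into strict bounds of the form $M>1-r_2$ by choosing $r_2\in(1-B,1)$ --- is routine and consistent with how the paper handles the same point.
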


\begin{proof}
Assume that $f:X\rightarrow Y$ is a coarse equivalence, and $g:Y\rightarrow X$ is a coarse inverse.
Assume that $(X,M_1,*_1)$ has property A, and let $\varepsilon>0$, $t>0$, $r\in(0,1)$ be given.
Since $g$ is uniformly expansive, there exist $B\in(0,1)$ and $t_1>0$ such that $M_1(g(y),g(y'),t_1)\geq B$ whenever $M_2(y,y',t)\geq 1-r$.
From the definition of property A, there exist $t_2>0$, $r_2>0$, and a family $\{A_x\}_{x\in X}$ of non-empty finite subsets of $X\times\mathbb{N}$ such that $A_x\subset B(x,r_2,t_2)\times\mathbb{N}$ for each $x\in X$, and $|A_x\Delta A_{x'}|<\varepsilon|A_x\cap A_{x'}|$ whenever $M_1(x,x',t)>B$.
Fix $y_0\in Y$. For each $y\in Y$, let $n_y=|(f^{-1}(y)\times\mathbb{N})\cap A_{g(y_0)}|$, and define
\[ B_{y_0}=\bigcup_{y\in Y,n_y\neq 0}\{(y,1),(y,2),\ldots,(y,n_y)\}. \]
The sets $(f^{-1}(y)\times\mathbb{N})\cap A_{g(y_0)}$ partition $A_{g(y_0)}$, so $|B_{y_0}|=|A_{g(y_0)}|<\infty$.
This produces a family $\{B_y\}_{y\in Y}$ of non-empty finite subsets of $Y\times\mathbb{N}$.

If $(y',n)\in B_y$, then there exists $(x,m)\in(f^{-1}(y')\times\mathbb{N})\cap A_{g(y)}$.
Now $M_1(g(y),x,t_2)>1-r_2$, so $M_2(f(g(y)),y',t_3)\geq C$ for some $t_3>0$ and $C\in(0,1)$.
Since $f\circ g$ is close to the identity map on $Y$, there exist $r_4\in(0,1)$ and $t_4>0$ such that $M_2(y,f(g(y)),t_4)>1-r_4$, so $M_2(y,y',t_3+t_4)\geq M_2(y,f(g(y)),t_4)*_2M_2(f(g(y)),y',t_3)\geq C*_2(1-r_4)>1-r_5$ for some $r_5\in(0,1)$.

If $(x,m)\in A_{g(y)}$ for some $m$, then $(f(x),1)\in B_y$, and it follows that $|A_{g(y)}\cap A_{g(y')}|\leq |B_y\cap B_{y'}|$ and $|A_{g(y)}\Delta A_{g(y')}|\geq|B_y\Delta B_{y'}|$ for $y,y'\in Y$.
Hence, $|B_y\Delta B_{y'}|<\varepsilon|B_y\cap B_{y'}|$ whenever $M_2(y,y',t)>1-r$.
Therefore $(Y,M_2,*_2)$ has property A.
\end{proof}

\begin{rem}
The same proof as above shows that if $g:Y\rightarrow X$ is a coarse embedding, and $(X,M_1,*_1)$ has property A, then so does $(Y,M_2,*_2)$. This is because $g:(Y,M_2,*_2)\rightarrow (g(Y),M_2|_{g(Y)\times g(Y)\times(0,\infty)},*_2)$ is a coarse equivalence and we may consider a coarse inverse $f:g(Y)\rightarrow Y$. Therefore, property A is inherited by fuzzy metric subspaces.
\end{rem}

\section{Characterizations of property A}

In this section, we provide characterizations of property A for uniformly locally finite fuzzy metric spaces in terms of positive definite kernels, maps into Hilbert spaces, and operators on Hilbert space. We begin with definitions of terms that will appear in the result.

\begin{defn}
Let $X$ be a set. A map $k:X\times X\rightarrow\mathbb{C}$ (resp. $\mathbb{R}$) is called a positive definite kernel if for all finite sequences $x_1,\ldots,x_n$ in $X$ and $\lambda_1,\ldots,\lambda_n\in\mathbb{C}$ (resp. $\mathbb{R}$), we have $\sum_{i,j=1}^n\lambda_i\overline{\lambda_j} k(x_i,x_j)\geq 0$.
\end{defn}

\begin{defn} 
Let $(X,M,*)$ be a uniformly locally finite fuzzy metric space, and write $B(\ell^2(X))$ for the set of all bounded linear operators on $\ell^2(X)$.
An operator $S\in B(\ell^2(X))$ has finite propagation if there exist $t>0$ and $r\in(0,1)$ such that $\langle S\delta_y,\delta_x \rangle=0$ whenever $M(x,y,t)<1-r$.
Here, $\delta_x$ refers to the standard basis vector in $\ell^2(X)$ given by
\[ \delta_x(z)=\begin{cases} 1\;\text{if}\;z=x, \\ 0\;\text{otherwise}. \end{cases} \]

The uniform Roe algebra $C^*_u(X)$ is the operator norm closure in $B(\ell^2(X))$ of the set of all bounded linear operators on $\ell^2(X)$ with finite propagation.
\end{defn}

Note that the composition $S_1\circ S_2$ has finite propagation whenever $S_1$ and $S_2$ do. Indeed, if $\langle S_1\delta_y,\delta_x \rangle=0$ whenever $M(x,y,t_1)<1-r_1$, and $\langle S_2\delta_y,\delta_x \rangle=0$ whenever $M(x,y,t_2)<1-r_2$, then $\langle S_1\circ S_2\delta_y,\delta_x \rangle=0$ whenever $M(x,y,t_1+t_2)<(1-r_1)*(1-r_2)$.

The following follows from \cite[Theorem 3.1]{Sako} on general uniformly locally finite coarse spaces but we shall rewrite the proof in terms of fuzzy metric spaces. The reader may refer to \cite[Theorem 1.11]{Willett} for the analogous statements for uniformly locally finite metric spaces.

\begin{thm} \label{charThm}
The following are equivalent for a uniformly locally finite fuzzy metric space $(X,M,*)$:

\begin{enumerate}
\item $X$ has property A.
\item For every $\varepsilon>0$, $r\in(0,1)$, and $t>0$, there exists a map $\eta:X\rightarrow\ell^1(X)$ such that
	\begin{itemize}
		\item $||\eta_x||_1=1$ for all $x\in X$,
		\item $||\eta_x-\eta_y||_1<\varepsilon$ if $M(x,y,t)>1-r$,
		\item there exist $R\in(0,1)$ and $T>0$ such that the support of $\eta_x$ satisfies $\mathrm{supp}(\eta_x)\subseteq B(x,R,T)$ for all $x\in X$.
	\end{itemize}
\item For every $\varepsilon>0$, $r\in(0,1)$, and $t>0$, there exists a map $\eta:X\rightarrow\ell^2(X)$ such that
	\begin{itemize}
		\item $||\eta_x||_2=1$ for all $x\in X$,
		\item $||\eta_x-\eta_y||_2<\varepsilon$ if $M(x,y,t)>1-r$,
		\item there exist $R\in(0,1)$ and $T>0$ such that the support of $\eta_x$ satisfies $\mathrm{supp}(\eta_x)\subseteq B(x,R,T)$ for all $x\in X$.
	\end{itemize}
\item For every $\varepsilon>0$, $r\in(0,1)$, and $t>0$, there exist a Hilbert space $H$ and a map $\eta:X\rightarrow H$ such that
	\begin{itemize}
		\item $||\eta_x||=1$ for all $x\in X$,
		\item $||\eta_x-\eta_y||<\varepsilon$ if $M(x,y,t)>1-r$,
		\item there exist $R\in(0,1)$ and $T>0$ such that $\langle\eta_x,\eta_y\rangle=0$ whenever $M(x,y,T)< 1-R$.
	\end{itemize}
\item For every $\varepsilon>0$, $r\in(0,1)$, and $t>0$, there exist $R\in(0,1)$, $T>0$, and a positive definite kernel $k:X\times X\rightarrow\mathbb{R}$ such that
	\begin{itemize}
		\item $|1-k(x,y)|<\varepsilon$ if $M(x,y,t)>1-r$,
		\item there exist $R\in(0,1)$ and $T>0$ such that $k(x,y)=0$ whenever $M(x,y,T)< 1-R$.
	\end{itemize}
\item For every $\varepsilon>0$, $r\in(0,1)$, and $t>0$, there exist $R\in(0,1)$, $T>0$, and a positive definite kernel $k:X\times X\rightarrow\mathbb{C}$ such that
	\begin{itemize}
		\item $|1-k(x,y)|<\varepsilon$ if $M(x,y,t)>1-r$,
		\item there exist $R\in(0,1)$ and $T>0$ such that $k(x,y)=0$ whenever $M(x,y,T)< 1-R$,
		\item convolution with $k$ defines a bounded linear operator $S_k$ belonging to the uniform Roe algebra $C^*_u(X)$.
	\end{itemize}
\end{enumerate}
\end{thm}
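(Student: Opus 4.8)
The plan is to prove that the six conditions are equivalent by running the cycle $(1)\Rightarrow(2)\Rightarrow(3)\Rightarrow(4)\Rightarrow(5)\Rightarrow(6)\Rightarrow(1)$, following the metric-space argument of \cite[Theorem 1.11]{Willett} and the coarse-space argument of \cite[Theorem 3.1]{Sako}, but carrying out every estimate in terms of the values $M(x,y,t)$ rather than a genuine metric. Throughout I will read ``$x,y$ close'' as $M(x,y,t)>1-r$ and ``$x,y$ far'' as $M(x,y,T)<1-R$, and I will convert one pair of thresholds into another using the triangle inequality $M(x,y,t)*M(y,z,s)\le M(x,z,t+s)$, the monotonicity of $M(x,y,\cdot)$, and the absence of zero divisors for $*$ (which keeps quantities of the form $(1-R)^{*(k)}$ strictly positive).

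Most of the implications are soft and amount to a single construction together with one inequality. For $(1)\Rightarrow(2)$ I push the normalised counting measure of $A_x\subset X\times\mathbb N$ forward to $X$, setting $\eta_x(z)=|A_x\cap(\{z\}\times\mathbb N)|/|A_x|$; the Følner inequality translates into smallness of $\|\eta_x-\eta_y\|_1$ and the containment $A_x\subset B(x,r',t')\times\mathbb N$ gives $\mathrm{supp}(\eta_x)\subseteq B(x,r',t')$. For $(2)\Rightarrow(3)$ I apply the Mazur map $\eta_x\mapsto|\eta_x|^{1/2}$ (replacing $\eta_x$ by $|\eta_x|$ first), using $|\sqrt a-\sqrt b|^2\le|a-b|$ to get $\||\eta_x|^{1/2}-|\eta_y|^{1/2}\|_2^2\le\|\eta_x-\eta_y\|_1$. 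For $(3)\Rightarrow(4)$ I take $H=\ell^2(X)$ and trade finite support for orthogonality: if $z\in\mathrm{supp}(\eta_x)\cap\mathrm{supp}(\eta_y)$ then $M(x,y,2T)\ge(1-R)*(1-R)$, so with the new thresholds $T'=2T$ and $1-R'=(1-R)^{*(2)}$ one gets $\langle\eta_x,\eta_y\rangle=0$ whenever $M(x,y,T')<1-R'$. For $(4)\Rightarrow(5)$ I set $k(x,y)=\operatorname{Re}\langle\eta_x,\eta_y\rangle$, which is positive definite over $\mathbb R$, satisfies $1-k(x,y)=\tfrac12\|\eta_x-\eta_y\|^2$ on close pairs, and vanishes on far pairs. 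For $(5)\Rightarrow(6)$ I note that a real symmetric positive definite kernel is also positive definite over $\mathbb C$, and that the convolution operator $S_k$ is automatically bounded by a Schur test: $k$ is bounded since $k(x,x)\le1+\varepsilon$ and $|k(x,y)|\le\sqrt{k(x,x)k(y,y)}$, while each row and column has at most $N_{R,T}$ nonzero entries by uniform local finiteness; hence $S_k$ has finite propagation and lies in $C^*_u(X)$.

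The main work, and the step I expect to be the real obstacle, is the closing implication $(6)\Rightarrow(1)$, where one must recover the finite Følner-type data of property A from the mere vanishing of $k$ on far pairs. The difficulty is that ``$k=0$ on far pairs'' is much weaker than finite support: the Hilbert-space realisation $k(x,y)=\langle\xi_x,\xi_y\rangle$ coming from the kernel need not consist of finitely supported vectors. This is exactly where the uniform Roe algebra condition earns its place. Since $S_k\ge0$ lies in the C*-algebra $C^*_u(X)$, its square root $S_k^{1/2}$ also lies in $C^*_u(X)$ and can therefore be approximated in norm by a finite-propagation operator $T$. Setting $\eta_x=T\delta_x$ produces genuinely finitely supported vectors, with $\mathrm{supp}(\eta_x)$ contained in a fixed ball about $x$ by the finite propagation of $T$, and with $\langle\eta_x,\eta_y\rangle$ as close as desired to $\langle S_k\delta_x,\delta_y\rangle=k(x,y)$; after normalising by $\|\eta_x\|$ this recovers condition (3). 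I then close the loop at the finitely supported level: $(3)\Rightarrow(2)$ via $\eta_x\mapsto|\eta_x|^2$, where Cauchy--Schwarz gives $\big\||\eta_x|^2-|\eta_y|^2\big\|_1\le2\|\eta_x-\eta_y\|_2$, and $(2)\Rightarrow(1)$ by the ``subgraph'' construction $A_x=\{(z,n):1\le n\le\lfloor C\eta_x(z)\rfloor\}$ for a large integer $C$, whose cardinalities satisfy $|A_x|\approx C$ and $|A_x\Delta A_y|\lesssim C\|\eta_x-\eta_y\|_1+N_{R,T}$, so the Følner inequality follows once $\|\eta_x-\eta_y\|_1$ is small and $C$ is large.

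I expect the threshold bookkeeping to be the most error-prone part rather than the conceptually hard one: each application of the triangle inequality doubles the time parameter and replaces the tolerance by a $*$-power, so the auxiliary pair $(R',T')$ must be fixed before the maps are chosen. But the genuinely essential point is $(6)\Rightarrow(1)$, since without uniform local finiteness both the Schur bound in $(5)\Rightarrow(6)$ and the finiteness of $\mathrm{supp}(\eta_x)$ in $(6)\Rightarrow(1)$ break down, and without the C*-algebraic square root one cannot pass from the orthogonality condition back to finite support. I would therefore treat $(6)\Rightarrow(1)$ as the crux and track carefully how the approximation error $\|S_k^{1/2}-T\|$ propagates through the normalisation $\eta_x/\|\eta_x\|$ into the final Følner estimate.
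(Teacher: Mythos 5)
Your proposal is correct and takes essentially the same route as the paper's proof: the same cycle (i)$\Rightarrow$(ii)$\Rightarrow$(iii)$\Rightarrow$(iv)$\Rightarrow$(v)$\Rightarrow$(vi) with the same constructions (normalized counting functions, the square-root/Mazur map, support overlap forcing orthogonality at the thresholds $2T$ and $(1-R)^{*(2)}$, the kernel $k(x,y)=\mathrm{Re}\langle\eta_x,\eta_y\rangle$, and the Schur-test bound via $N_{R,T}$), and the same closing step you correctly identify as the crux, namely (vi)$\Rightarrow$(iii) via the positive square root of $S_k$ in $C^*_u(X)$ approximated in norm by a finite-propagation operator, followed by the subgraph discretization of the $\ell^1$ vectors to recover the F\o{}lner sets. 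The only cosmetic deviations are that you set $\eta_x=T\delta_x$ and use $T^*T\approx S_k$ where the paper self-adjointizes the approximant $S_m$ and uses $S_m^2$, and that you factor the final descent explicitly as (iii)$\Rightarrow$(ii)$\Rightarrow$(i), which the paper's direct proof of (iii)$\Rightarrow$(i) carries out internally anyway.
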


\begin{proof}
(i) $\Rightarrow$ (ii):
Suppose $X$ has property A. Fix $\varepsilon>0$, $r\in(0,1)$, and $t>0$.
There exist $r'\in(0,1)$, $t'>0$, and a family $\{A_x\}_{x\in X}$ of non-empty finite subsets of $X\times\mathbb{N}$ such that $A_x\subset B(x,r',t')\times\mathbb{N}$ for each $x\in X$, and $|A_x\Delta A_y|<\frac{\varepsilon}{2}|A_x\cap A_y|$ whenever $M(x,y,t)>1-r$.

Define $A_x(y)\subset\mathbb{N}$ by
\[ A_x(y)=\{n\in\mathbb{N}:(y,n)\in A_x\}. \]
For $x,y\in X$, define $\zeta_x(y)=|A_x(y)|$ and $\eta_x=\zeta_x/||\zeta_x||_1$. Then $\eta_x$ is a unit vector in $\ell^1(X)$ for each $x\in X$.

If $M(x,y,t)>1-r$, then
\begin{align*}
||\eta_x-\eta_y||_1 &= \frac{||\;||\zeta_y||_1\zeta_x - ||\zeta_x||_1\zeta_y\;||_1}{||\zeta_x||_1||\zeta_y||_1} \\
&\leq \frac{|\;||\zeta_y||_1-||\zeta_x||_1\;|\cdot||\zeta_x||_1 + ||\zeta_x||_1\cdot||\zeta_x-\zeta_y||_1}{||\zeta_x||_1||\zeta_y||_1} \\
&\leq 2\frac{||\zeta_x-\zeta_y||_1}{||\zeta_y||_1} \\
&\leq 2\frac{|A_x\Delta A_y|}{|A_y|} \\
&<\varepsilon.
\end{align*}
Moreover, $\eta_x(y)\neq 0$ if and only if $\zeta_x(y)\neq 0$, and in this case $(y,n)\in A_x$ for some $n\in\mathbb{N}$, so $y\in B(x,r',t')$.

(ii) $\Rightarrow$ (iii):
Assuming (ii), fix $\varepsilon>0$, $r\in(0,1)$, and $t>0$.
Then there is a map $\xi:X\rightarrow\ell^1(X)$ such that
	\begin{itemize}
		\item $||\xi_x||_1=1$ for all $x\in X$,
		\item $||\xi_x-\xi_y||_1<\varepsilon^2$ if $M(x,y,t)>1-r$,
		\item there exist $R\in(0,1)$ and $T>0$ such that the support of $\xi_x$ satisfies $\mathrm{supp}(\xi_x)\subseteq B(x,R,T)$ for all $x\in X$.
	\end{itemize}
Replacing $\xi_x$ by the map $y\mapsto|\xi_x(y)|$, we may assume $\xi_x$ takes nonnegative values while retaining the properties above.
For each $x\in X$, define a unit vector $\eta_x$ in $\ell^2(X)$ by $\eta_x(y)=\sqrt{\xi_x(y)}$.
If $M(x,y,t)>1-r$, then \[ ||\eta_x-\eta_y||_2\leq\sqrt{||\xi_x-\xi_y||_1}<\varepsilon. \]
Moreover, $\mathrm{supp}(\eta_x)=\mathrm{supp}(\xi_x)\subseteq B(x,R,T)$ for all $x\in X$.

(iii) $\Rightarrow$ (iv):
Assuming (iii), if $\langle\eta_x,\eta_y\rangle\neq 0$, then there exists $z\in\mathrm{supp}(\eta_x)\cap\mathrm{supp}(\eta_y)$, so $M(x,z,T)>1-R$ and $M(z,y,T)=M(y,z,T)>1-R$. Hence, $M(x,y,2T)\geq M(x,z,T)*M(z,y,T)\geq (1-R)*(1-R)=1-R'$ for some $R'\in(0,1)$.

(iv) $\Rightarrow$ (v):
Assuming (iv), we get a positive definite kernel $k:X\times X\rightarrow\mathbb{R}$ by $k(x,y)=Re(\langle\eta_x,\eta_y\rangle)$. 
Now $||\eta_x-\eta_y||^2=||\eta_x||^2+||\eta_y||^2-2Re(\langle\eta_x,\eta_y\rangle)=2-2Re(\langle\eta_x,\eta_y\rangle)$, so if $M(x,y,t)>1-r$, then $|1-k(x,y)|=|1-Re(\langle\eta_x,\eta_y\rangle)|=||\eta_x-\eta_y||^2/2<\varepsilon^2/2$.
Also, $k(x,y)=0$ whenever $M(x,y,T)<1-R$.

(v) $\Rightarrow$ (vi):
Taking $k,R,T$ from (v), since $X$ is uniformly locally finite, there exists $N_{R,T}$ such that $|B(x,R,T)|<N_{R,T}$ for all $x\in X$, so for each $x\in X$, we have $|\{y\in X:k(x,y)\neq 0\}|\leq N_{R,T}$.
Also, $|k(x,y)|\leq\sqrt{|k(x,x)k(y,y)|}=1$ by Cauchy-Schwarz.
Convolution with $k$ defines a linear map $S_k:\ell^2(X)\rightarrow\ell^2(X)$ given by \[ (S_k\xi)_x=\sum_{y\in X}k(x,y)\xi_y. \]
Now
\begin{align*}
||S_k\xi||_2^2 &= \sum_{x\in X}|\sum_{y\in B(x,R,T)}k(x,y)\xi_y|^2 \leq \sum_{x\in X}(\sum_{y\in B(x,R,T)}|\xi_y|)^2 \\
&\leq \sum_{y\in X}N_{R,T}\sum_{x\in B(y,R,T)}|\xi_y|^2\leq\sum_{y\in X}N_{R,T}^2|\xi_y|^2= N_{R,T}^2||\xi||_2^2.
\end{align*}
Hence, $S_k$ is a bounded operator on $\ell^2(X)$.
Moreover, $\langle S_k\delta_y,\delta_x \rangle=k(x,y)$, so $S_k$ has finite propagation.

(vi) $\Rightarrow$ (iii):
Assuming (vi), note that $S_k$ is a positive operator by seeing that $\langle S_kf,f \rangle\geq 0$ for all $f$ in the dense subset of all finitely supported functions in $\ell^2(X)$.
Thus, it has a unique positve square root $S_l$, and there is an operator $S_m$ with finite propagation such that
\[ ||S_l-S_m||<\min(\varepsilon,\frac{\varepsilon}{2(||S_l||+\varepsilon)}). \]
By passing to $(S_m+S_m^*)/2$, we may assume that $S_m$ is self-adjoint.
The operator $S_m$ corresponds to a kernel $m$, and there exist $R'\in(0,1)$ and $T'>0$ such that $m(x,y)=0$ whenever $M(x,y,T')<1-R'$.
Also,
\[ ||S_k-S_m^2||=||S_l^2-S_m^2||\leq ||S_l|| ||S_l-S_m||+||S_l-S_m|| ||S_m||<\varepsilon. \]
For each $x\in X$, define $\theta_x\in\ell^2(X)$ by $\theta_x(y)=m(x,y)$.
Note that \[ \langle\theta_x,\theta_y\rangle=\sum_{z\in X}m(x,z)\overline{m(y,z)}=\sum_{z\in X}m(x,z)m(z,y), \]
so $|\langle\theta_x,\theta_y\rangle-k(x,y)|<\varepsilon$.
If $M(x,y,t)>1-r$, then
\[ |1-\langle\theta_x,\theta_y\rangle|\leq |\langle\theta_x,\theta_y\rangle-k(x,y)|+|1-k(x,y)|<2\varepsilon, \]
so $||\theta_x-\theta_y||_2^2=\langle\theta_x,\theta_x\rangle+\langle\theta_y,\theta_y\rangle-2Re(\langle\theta_x,\theta_y\rangle)<8\varepsilon$. 
Also, $||\theta_x||_2^2>1-2\varepsilon$ so setting $\eta_x=\theta_x/||\theta_x||_2^2$, we have
\begin{align*}
||\eta_x-\eta_y||_2^2 &\leq \frac{|\;||\theta_y||_2-||\theta_x||_2\;|\cdot||\theta_x||_2 + ||\theta_x||_2\cdot||\theta_x-\theta_y||_2}{||\theta_x||_2||\theta_y||_2} \\
&\leq 2\frac{||\theta_x-\theta_y||_2}{||\theta_y||_2} \\
&< 2\sqrt{\frac{8\varepsilon}{1-2\varepsilon}}.
\end{align*}
Moreover, since $m(x,y)=0$ whenever $M(x,y,T')<1-R'$, we have $\mathrm{supp}(\eta_x)\subseteq B(x,R',T')$.

(iii) $\Rightarrow$ (i):
Assuming (iii), fix $\varepsilon>0$, $t>0$, and $r\in(0,1)$.
There exist unit vectors $\{\eta_x\}_{x\in X}$ in $\ell^2(X)$ such that $||\eta_x-\eta_y||<\varepsilon$ whenever $M(x,y,t)>1-r$, and there exist $R\in(0,1)$ and $T>0$ such that $\mathrm{supp}(\eta_x)\subseteq B(x,R,T)$ for all $x\in X$.
Since $X$ is uniformly locally finite, there exists $N_{R,T}$ such that $|\mathrm{supp}(\eta_x)|<N_{R,T}$ for all $x\in X$.

Define unit vectors $\xi_x\in\ell^1(X)$ by $\xi_x(y)=|\eta_x(y)|^2$. 
If $M(x,y,t)>1-r$, then $||\xi_x-\xi_y||_1\leq ||\eta_x+\eta_y||_2||\eta_x-\eta_y||_2<2\varepsilon$.
Choose $N\in\mathbb{N}$ such that $N>N_{R,T}/\varepsilon$, and define $\zeta_x(y)=j/N$, where $j\in\{0,\ldots,N\}$ and $j-1< N\xi_x(y)\leq j$.
Then $||\zeta_x-\xi_x||_1\leq N_{R,T}/N<\varepsilon$.
Thus, if $M(x,y,t)>1-r$, then $||\zeta_x-\zeta_y||_1<4\varepsilon$.

For $x\in X$, define $A_x\subset X\times\mathbb{N}$ by declaring that $(x',j)\in A_x$ if and only if $0<j\leq N\zeta_x(x')$.
Then $A_x\subset B(x,R,T)\times\mathbb{N}$ and $|A_x|=N||\zeta_x||_1$ for each $x\in X$.
Note that 
\begin{align*}
|A_{x_1}\Delta A_{x_2}| &=\sum_{x\in X}N|\zeta_{x_1}(x)-\zeta_{x_2}(x)|=N||\zeta_{x_1}-\zeta_{x_2}||_1 \\
&=\frac{|A_{x_1}|}{||\zeta_{x_1}||_1}||\zeta_{x_1}-\zeta_{x_2}||_1,
\end{align*}
so
\[ \frac{|A_{x_1}\Delta A_{x_2}|}{|A_{x_1}|}<\frac{||\zeta_{x_1}-\zeta_{x_2}||_1}{||\zeta_{x_1}||_1}<\frac{4\varepsilon}{1-\varepsilon}\] if $M(x_1,x_2,t)>1-r$.
In this case, \begin{align*} \frac{4\varepsilon}{1-\varepsilon}>\frac{|A_{x_1}\Delta A_{x_2}|}{|A_{x_1}|}&=\frac{|A_{x_1}|+|A_{x_2}|-2|A_{x_1}\cap A_{x_2}|}{|A_{x_1}|} \\ &>\frac{2(1-\varepsilon)}{1+\varepsilon}-\frac{2|A_{x_1}\cap A_{x_2}|}{N(1+\varepsilon)}, \end{align*}
so 
\begin{align*}
|A_{x_1}\cap A_{x_2}| > N(1-\varepsilon)-\frac{2N\varepsilon(1+\varepsilon)}{1-\varepsilon} = \frac{1-4\varepsilon-\varepsilon^2}{1-\varepsilon}N.
\end{align*}
Hence,
\begin{align*}
\frac{|A_{x_1}\Delta A_{x_2}|}{|A_{x_1}\cap A_{x_2}|} &< \frac{2N(1+\varepsilon)(1-\varepsilon)-2N(1-4\varepsilon-\varepsilon^2)}{N(1-4\varepsilon-\varepsilon^2)} \\
&= \frac{8\varepsilon}{1-4\varepsilon-\varepsilon^2},
\end{align*}
which tends to zero as $\varepsilon$ tends to 0, showing that $X$ has property A.
\end{proof}

\begin{rem}
In the proof above, the assumption of uniform local finiteness was used only in the implications (v) $\Rightarrow$ (vi) and (iii) $\Rightarrow$ (i).
\end{rem}

Using (ii) in the previous theorem, we will refine Theorem \ref{thm:asdim} to show that if the multiplicities of certain uniformly bounded covers of a fuzzy metric space grow at a subexponential rate, then the space has property A.

For the next result, we will write $\mathrm{mult}(\mathcal{U})$ for the multiplicity of a cover $\mathcal{U}$, and we will write $L(\mathcal{U})\geq(r,t)$ to mean that $\mathcal{U}$ has a Lebesgue pair with values at least $r$ and $t$.
Using Lemma \ref{lem:balls} and \cite[Proposition 3.4]{Grz}, one recovers \cite[Theorem 1]{Oz} when our result is applied to the standard fuzzy metric space corresponding to a metric space.

We shall say that a function $f:(0,\infty)\rightarrow\mathbb{R}$ has subexponential growth if $\lim_{t\rightarrow\infty}\frac{\ln f(t)}{t}=0$.

\begin{thm}
Let $(X,M,*)$ be a uniformly locally finite fuzzy metric space.
Suppose there exists $r\in(0,1)$ such that the function
\[ ad_X(t)=\min\{ \mathrm{mult}(\mathcal{U}):\mathcal{U}\;\text{uniformly bounded cover of}\;X,L(\mathcal{U})\geq(r',t) \}-1 \]
has subexponential growth, where $r'=1-\frac{1}{2}(1-r)^{*(2)}$. 
Then $(X,M,*)$ has property A.
\end{thm}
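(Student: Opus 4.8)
The plan is to verify condition (ii) of Theorem~\ref{charThm}, since producing finite-propagation maps into $\ell^1(X)$ is exactly what the construction in Theorem~\ref{thm:asdim} does when the multiplicity is bounded; the task here is to let the multiplicity grow with the scale and still keep the relevant defect small. Fix $\varepsilon>0$, $r\in(0,1)$, and $t>0$. The role of $r'=1-\tfrac12(1-r)^{*(2)}$ is $t$-norm bookkeeping: if $M(x,y,t)>1-r$ and $M(y,z,t)>1-r$, then $M(x,z,2t)\geq(1-r)^{*(2)}>1-r'$, so a cover with a Lebesgue pair at least $(r',t)$ is coarse enough to absorb the two-step expansions of the balls $B(x,r,t)$ that the closeness hypothesis produces. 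First I would record, for a large integer $N$ to be fixed later and each $j=1,\dots,N$, a uniformly bounded cover $\mathcal{U}_j$ realizing the minimum defining $ad_X$ at the time scale $jt$, so that $\mathrm{mult}(\mathcal{U}_j)=ad_X(jt)+1$ and $L(\mathcal{U}_j)\geq(r',jt)$; uniform local finiteness lets me take each $\mathcal{U}_j$ to consist of bounded sets with chosen basepoints $a_U$, exactly as in Theorem~\ref{thm:asdim}.

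Next I would run the $(r,t)$-chain construction of Theorem~\ref{thm:asdim} at all $N$ scales simultaneously. For each $j$ and each $U\in\mathcal{U}_j$, let $l^{(j)}_U(x)$ be the truncated length of the shortest $(r,t)$-chain from $x$ to $X\setminus U$, and set $A^{(j)}_x=\bigcup_{U\ni x}\{a_U\}\times\{1,\dots,l^{(j)}_U(x)\}$. The estimates in the proof of Theorem~\ref{thm:asdim} then give, whenever $M(x,y,t)>1-r$, a symmetric-difference bound $|A^{(j)}_x\Delta A^{(j)}_y|\leq 2\,\mathrm{mult}(\mathcal{U}_j)-1$ and an intersection bound governed by the depth available at scale $j$, together with $A^{(j)}_x\subseteq B(x,r',jt)\times\mathbb{N}$; passing to the normalized $\ell^1$-vectors as in the implication (i)$\Rightarrow$(ii) yields unit vectors $\eta^{(j)}_x$ supported in $B(x,r',jt)$ whose single-scale closeness defect is of order $\mathrm{mult}(\mathcal{U}_j)$ divided by the scale-$j$ depth.

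The heart of the matter, and what I expect to be the main obstacle, is to assemble the scales $j=1,\dots,N$ into a single map $\eta_x$ whose defect is controlled not by any individual $ad_X(jt)$ but by the \emph{logarithmic} growth rate of the multiplicities. The naive combinations all fail: a weighted average $\sum_j c_j\eta^{(j)}_x$, a tagged disjoint union, or a tensor product each produce a defect of the order $\sum_j\mathrm{mult}(\mathcal{U}_j)/(\text{scale-}j\text{ depth})$, and since the per-scale depth is comparable to $j$ this sum behaves like $\sum_j ad_X(jt)/j$, which already diverges for polynomially growing $ad_X$. The genuine content is therefore Ozawa's aggregation \cite{Oz}, transcribed into the language of $(r,t)$-chains: one must combine the scales so that the aggregate multiplicity $ad_X(Nt)+1$ enters the defect only through a quantity of the order $\frac{\ln\!\big(ad_X(Nt)+1\big)}{N}$, spread over the $N$ linearly-spaced scales rather than summed scale by scale. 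It is precisely here that the hypothesis of subexponential growth, $\lim_{s\to\infty}\frac{\ln ad_X(s)}{s}=0$, is used, and it is exactly what forces this quantity to $0$ as $N\to\infty$.

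Granting this aggregation estimate, I would choose $N$ so large that the resulting defect is below $\varepsilon$, check that $\eta_x$ is a unit vector in $\ell^1(X)$ with $\supp(\eta_x)\subseteq B(x,R,T)$ for some $R\in(0,1)$ and $T>0$ depending only on $N$ and $r'$ (so the finite-propagation clause of (ii) holds), and conclude by (ii)$\Rightarrow$(i) of Theorem~\ref{charThm} that $(X,M,*)$ has property A. The remaining fuzzy-specific steps---replacing metric balls by $B(x,r,t)$ via Lemma~\ref{lem:balls}, replacing the triangle inequality by $M(x,z,s+t)\geq M(x,y,s)*M(y,z,t)$, and using that $M(x,y,\cdot)$ is non-decreasing so that larger time parameters enlarge balls---are routine and already appear in Theorem~\ref{thm:asdim}; the one new ingredient is the logarithmic aggregation across scales described above.
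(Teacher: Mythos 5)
There is a genuine gap: the step you label ``Ozawa's aggregation'' and then grant is not an auxiliary estimate but the entire content of the theorem, and the scaffolding you set up beforehand is not one on which that estimate can be proved. You take $N$ \emph{different} covers $\mathcal{U}_j$ at scales $jt$ and build per-scale chain-length sets $A^{(j)}_x$ as in Theorem \ref{thm:asdim}; you correctly observe that every naive way of combining these gives a defect like $\sum_j ad_X(jt)/j$, which diverges, and you then assume an unproved combination achieving a defect of order $\ln\bigl(ad_X(Nt)+1\bigr)/N$. But the mechanism that actually achieves this (in \cite{Oz}, and in the paper's proof) does not aggregate data from many covers at all. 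It uses a \emph{single} cover $\mathcal{U}$ with Lebesgue pair at least $(r',6n)$ and multiplicity $ad_X(6n)+1$, and for that one cover considers the sets $S_x(r,k)=\{i: B(x,r,k)\subset U_i\}$ for a window of radii $k\in\{n+1,\dots,2n\}$. The point is the sandwich
\[
S_x(r',k+t)\subseteq S_x(r,k)\cap S_y(r,k)\subseteq S_x(r,k)\cup S_y(r,k)\subseteq S_x(r',k-t)
\]
when $M(x,y,t)>1-r$, which holds because all these sets refer to the \emph{same} cover and balls of increasing radius are nested. Averaging the normalized indicators $\xi_{S_x(r,k)}$ over the window and applying AM--GM makes the ratios $|S_x(r',k+t)|/|S_x(r',k-t)|$ telescope, yielding the bound $\|\eta_x^n-\eta_y^n\|_1\le 2\bigl(1-\mathrm{mult}(\mathcal{U})^{-2t/n}\bigr)$, into which subexponential growth of $ad_X$ feeds via $\ln\bigl(ad_X(6n)+1\bigr)/n\to 0$.

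Your multi-cover, chain-length setup blocks this mechanism twice over. First, sets attached to different covers $\mathcal{U}_j$ and $\mathcal{U}_{j'}$ stand in no inclusion relation, so there is no analogue of the sandwich above across scales and hence no telescoping product; the geometric-mean trick has nothing to act on. Second, the chain-length sets $A^{(j)}_x$ of Theorem \ref{thm:asdim} carry the truncation $\lfloor 2+(2n+1)/\varepsilon\rfloor$ baked in, which is exactly what forces the defect at scale $j$ to be $\mathrm{mult}(\mathcal{U}_j)$ divided by a depth of order $j$ --- the quantity you already showed cannot be summed. So ``granting this aggregation estimate'' amounts to granting the theorem. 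The fix is to discard the chain construction entirely, fix $\varepsilon,r,t$, take the one cover at scale $6n$, define $\zeta_x^n=\frac{1}{n}\sum_{k=n+1}^{2n}\xi_{S_x(r,k)}$, push it into $\ell^1(X)$ via basepoints $a_{U_i}$, and verify the three clauses of condition (ii) of Theorem \ref{charThm} directly; your identification of the target condition, of the role of $r'$ in absorbing one $t$-norm composition, and of where subexponentiality enters is all correct, but those are the framing of the proof rather than the proof.
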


\begin{proof}
Given $\varepsilon>0$, $r\in(0,1)$, and $t>0$, we will construct a map $\eta:X\rightarrow\ell^1(X)$ such that
\begin{itemize}
\item $||\eta_x||_1=1$ for all $x\in X$,
\item $||\eta_x-\eta_y||_1<\varepsilon$ if $M(x,y,t)>1-r$,
\item there exist $R\in(0,1)$ and $T>0$ such that $\mathrm{supp}(\eta_x)\subseteq B(x,R,T)$ for all $x\in X$.
\end{itemize}
Let $\mathcal{U}=\{U_i\}_{i\in I}$ be a uniformly bounded cover of $X$ with Lebesgue pair at least $(r',6n)$ and multiplicity $ad_X(6n)+1$.
Uniform boundedness gives $R\in(0,1)$ and $T>0$ such that $M(x,y,T)>1-R$ for all $x,y\in U_i$ and $i\in I$.

For each $i\in I$, choose $x_i\in U_i$ and let $J:\ell^1(I)\rightarrow\ell^1(X)$ be the contraction sending $\delta_i$ to $\delta_{x_i}$.
For each $x\in X$ and $k\in\mathbb{N}$, set \[S_x(r,k)=\{i\in I:B(x,r,k)\subset U_i\}.\]
For $x,y\in X$ and $t\in(0,k)$, if $M(x,y,t)>1-r$ and $M(y,z,k)>1-r$, then $M(x,z,k+t)\geq(1-r)*(1-r)>1-r'$. Also, if $M(x,y,t)>1-r$ and $M(x,z,k-t)>1-r$, then $M(y,z,k)\geq M(x,y,t)*M(x,z,k-t)\geq(1-r)*(1-r)>1-r'$,
so we have
\[ S_x(r',k+t)\subseteq S_x(r,k)\cap S_y(r,k)\subseteq S_x(r,k)\cup S_y(r,k)\subseteq S_x(r',k-t). \]
For each non-empty finite set $S$, set $\xi_S=|S|^{-1}\chi_S$, where $\chi_S$ denotes the characteristic function of $S$.
Then for any non-empty finite sets $S$ and $T$, \[ ||\xi_S-\xi_T||_1=2\left( 1-\frac{|S\cap T|}{\max(|S|,|T|)} \right). \]
Define \[ \zeta_x^n=\frac{1}{n}\sum_{k=n+1}^{2n}\xi_{S_x(r,k)}\in\ell^1(I) \quad\text{and}\quad\eta_x^n=J(\zeta_x^n)\in\ell^1(X). \]
Note that $||\eta_x^n||_1=1$ for all $x\in X$.
If $x_i\in\mathrm{supp}(\eta_x^n)$, then $i\in S_x(r,k)$ for some $k$.
Thus, $\mathrm{supp}(\eta_x^n)\subseteq B(x,R,T)$ for all $x\in X$.

Now assume $M(x,y,t)>1-r$ with $t\in\{1,\ldots,k-1\}$, and $n\geq t$.
Then
\[ ||\zeta_x^n-\zeta_y^n||_1\leq\frac{1}{n}\sum_{k=n+1}^{2n}||\xi_{S_x(r,k)}-\xi_{S_y(r,k)}||_1\leq\frac{2}{n}\sum_{k=n+1}^{2n}\left( 1-\frac{|S_x(r',k+t)|}{|S_x(r',k-t)|} \right). \]
On the other hand,
\begin{align*} \frac{1}{n}\sum_{k=n+1}^{2n}\frac{|S_x(r',k+t)|}{|S_x(r',k-t)|} &\geq \left( \prod_{k=n+1}^{2n}\frac{|S_x(r',k+t)|}{|S_x(r',k-t)|} \right)^{1/n} \\
&= \left( \frac{\prod_{k=2n-t+1}^{2n+t}|S_x(r',k)|}{\prod_{k=n-t+1}^{n+t}|S_x(r',k)|} \right)^{1/n} \\
&\geq \mathrm{mult}(\mathcal{U})^{-2t/n} \end{align*}
since $1\leq |S_x(r',k)|\leq\mathrm{mult}(\mathcal{U})$ for each $k\in\{n-t+1,\ldots,2n+t\}$.
Hence,
\[ ||\eta_x^n-\eta_y^n||_1 \leq ||\zeta_x^n-\zeta_y^n||_1 \leq 2(1-\mathrm{mult}(\mathcal{U})^{-2t/n}). \]
Since $\mathrm{mult}(\mathcal{U})=ad_X(6n)+1$ grows subexponentially in $n$, one sees that $||\eta_x^n-\eta_y^n||_1<\varepsilon$ for all sufficiently large $n$.
\end{proof}


\section{Coarse embeddability into Hilbert space} 

The following lemma shows that coarse embeddability of a fuzzy metric space into the standard fuzzy metric space corresponding to a metric can be verified using the metric instead of the standard fuzzy metric.
Using it, we will show that uniformly locally finite fuzzy metric spaces with property A are coarsely embeddable into Hilbert space, which is in line with the original motivation for the introduction of property A for metric spaces as shown in \cite[Theorem 2.2]{Yu00}.

We shall omit the proof of the lemma as it is straightforward using the definitions and Lemma \ref{lem:balls}.

\begin{lem}
Let $(X,M_1,*_1)$ be a fuzzy metric space, let $(Y,d)$ be a metric space, and let $(Y,M_2,*_2)$ be the corresponding standard fuzzy metric space.
\begin{enumerate}
\item A map $f:(X,M_1,*_1)\rightarrow (Y,M_2,*_2)$ is uniformly expansive if and only if for all $A>0$ and $t>0$, there exists $S>0$ such that $d(f(x),f(x'))\leq S$ whenever $M_1(x,x',t)\geq A$.
\item A map $f:(X,M_1,*_1)\rightarrow (Y,M_2,*_2)$ is effectively proper if and only if for all $R>0$ there exists $D\in(0,1)$ and $t>0$ such that $M_1(x,x',t)\geq D$ whenever $d(f(x),f(x'))\leq R$.
\end{enumerate}
\end{lem}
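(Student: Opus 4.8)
The plan is to prove each of the two biconditionals by its two obvious implications, and in every case the only real content is the explicit formula $M_2(y,y',s)=\frac{s}{s+d(y,y')}$ defining the standard fuzzy metric (equivalently, Lemma~\ref{lem:balls}). From this formula one reads off two elementary translations that I would use repeatedly: for $A\in(0,1)$ and $s>0$ one has $M_2(y,y',s)\geq A$ if and only if $d(y,y')\leq\frac{s(1-A)}{A}$, and conversely, for any $R>0$ and $s>0$, the bound $d(y,y')\leq R$ forces $M_2(y,y',s)\geq\frac{s}{s+R}$. Thus the ``fuzzy'' conditions $M_2(f(x),f(x'),s)\geq C$ and the ``metric'' conditions $d(f(x),f(x'))\leq R$ are interchangeable up to adjusting parameters, and the proof becomes a matter of tracking which implication is needed.

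For part~(1), I would first suppose $f$ is uniformly expansive and fix $A>0$ and $t>0$. The definition supplies $B\in(0,1)$ and $t'>0$ with $M_2(f(x),f(x'),t')\geq B$ whenever $M_1(x,x',t)\geq A$; applying the first translation to the conclusion gives $d(f(x),f(x'))\leq\frac{t'(1-B)}{B}=:S$, as required. Conversely, given the metric condition, I would fix $A>0$ and $t>0$, take the resulting $S$, and then choose $t'=1$ and $B=\frac{1}{1+S}\in(0,1)$; the second translation shows $d(f(x),f(x'))\leq S$ implies $M_2(f(x),f(x'),t')\geq B$, so $f$ is uniformly expansive.

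For part~(2), I would first suppose $f$ is effectively proper and fix $R>0$. Choosing the codomain time $s=1$ and $C=\frac{1}{1+R}\in(0,1)$, the second translation shows $d(f(x),f(x'))\leq R$ implies $M_2(f(x),f(x'),s)\geq C$; feeding this $C$ and $s$ into the definition of effective properness yields $D\in(0,1)$ and $t>0$ with $M_1(x,x',t)\geq D$ whenever $M_2(f(x),f(x'),s)\geq C$, hence whenever $d(f(x),f(x'))\leq R$. Conversely, given the metric condition and arbitrary $C>0$ and $s>0$, the first translation (the degenerate case $C\geq 1$, which forces $f(x)=f(x')$, being handled trivially) converts $M_2(f(x),f(x'),s)\geq C$ into $d(f(x),f(x'))\leq\frac{s(1-C)}{C}=:R$; applying the hypothesis to this $R$ produces $D\in(0,1)$ and $t>0$ with $M_1(x,x',t)\geq D$ whenever $M_2(f(x),f(x'),s)\geq C$, so $f$ is effectively proper. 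I expect no genuine obstacle here: the only points requiring any care are purely bookkeeping, namely keeping straight which time parameter lives on the domain and which on the codomain, and reconciling the strict inequalities of Lemma~\ref{lem:balls} with the non-strict ones in the definitions, which is harmless since one only needs the existence of suitable bounds and may enlarge them slightly.
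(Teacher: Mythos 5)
Your proposal is correct and follows exactly the route the paper indicates: the paper omits the proof as ``straightforward using the definitions and Lemma \ref{lem:balls},'' and your argument is precisely that fill-in, translating between $M_2(y,y',s)\geq A$ and $d(y,y')\leq s(1-A)/A$ via the formula $M_2(y,y',s)=\frac{s}{s+d(y,y')}$ and tracking quantifiers. The degenerate case $C\geq 1$ and the strict/non-strict inequality issue are both handled correctly, so no gaps remain.
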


\begin{rem}
One can reverse the roles of $X$ and $Y$ above to get analogous statements when $X$ is a metric space instead.
Consequently, one sees that there is a coarse embedding between two metric spaces if and only if there is a coarse embedding between the corresponding standard fuzzy metric spaces, and similarly for coarse equivalences.
\end{rem}

\begin{lem} \label{lem:Ace}
Let $(X,M,*)$ be a uniformly locally finite fuzzy metric space with property A. For every $\varepsilon>0$, $r\in(0,1)$, and $t>0$, there exists a map $\xi:X\rightarrow\ell^2(X\times\mathbb{N})$ such that
\begin{itemize}
\item $||\xi_x||_2=1$ for all $x\in X$,
\item $||\xi_x-\xi_y||_2<\varepsilon$ if $M(x,y,t)>1-r$,
\item there exist $R\in(0,1)$ and $T>0$ such that the support of $\xi_x$ satisfies $\mathrm{supp}(\xi_x)\subseteq B(x,R,T)\times\mathbb{N}$ for all $x\in X$.
\end{itemize}
\end{lem}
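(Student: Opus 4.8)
The plan is to realize $\xi$ directly from a family $\{A_x\}$ witnessing property A, by taking $\ell^2$-normalized indicator functions and keeping them in $\ell^2(X\times\mathbb{N})$ rather than pushing them down to $\ell^2(X)$. Given $\varepsilon>0$, $r\in(0,1)$, and $t>0$, I would first invoke the definition of property A with $\varepsilon^2/2$ in place of $\varepsilon$ (and the same $r$ and $t$) to obtain $r'\in(0,1)$, $t'>0$, and a family $\{A_x\}_{x\in X}$ of non-empty finite subsets of $X\times\mathbb{N}$ such that $A_x\subseteq B(x,r',t')\times\mathbb{N}$ for each $x$, and $|A_x\Delta A_y|<\frac{\varepsilon^2}{2}|A_x\cap A_y|$ whenever $M(x,y,t)>1-r$. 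For each $x\in X$ I set $\xi_x=|A_x|^{-1/2}\chi_{A_x}\in\ell^2(X\times\mathbb{N})$, where $\chi_{A_x}$ denotes the indicator function of $A_x$. Since each $A_x$ is a non-empty finite set, $\xi_x$ is a well-defined unit vector, which gives the first bullet point immediately. For the third bullet, $\mathrm{supp}(\xi_x)=A_x\subseteq B(x,r',t')\times\mathbb{N}$, so it holds with $R=r'$ and $T=t'$.

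The only genuine computation is the second bullet, and this is the step I would carry out carefully. Assume $M(x,y,t)>1-r$, and without loss of generality that $|A_x|\geq|A_y|$. Evaluating the inner product gives $\langle\xi_x,\xi_y\rangle=|A_x\cap A_y|/(|A_x|\,|A_y|)^{1/2}$; since $(|A_x|\,|A_y|)^{1/2}\leq|A_x|$ while $|A_x|-|A_x\cap A_y|=|A_x\setminus A_y|\leq|A_x\Delta A_y|$, one obtains
\[ \langle\xi_x,\xi_y\rangle\geq\frac{|A_x\cap A_y|}{|A_x|}\geq 1-\frac{|A_x\Delta A_y|}{|A_x|}. \]
Hence $\|\xi_x-\xi_y\|_2^2=2-2\langle\xi_x,\xi_y\rangle\leq 2|A_x\Delta A_y|/|A_x|$, and combining $|A_x\cap A_y|\leq|A_x|$ with the property A estimate $|A_x\Delta A_y|<\frac{\varepsilon^2}{2}|A_x\cap A_y|$ yields $\|\xi_x-\xi_y\|_2^2<\varepsilon^2$, as required.

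I do not anticipate any serious obstacle: this is essentially the $\ell^2$-normalization of the counting vectors already appearing in the implication (i) $\Rightarrow$ (ii) of Theorem \ref{charThm}, and the support computation is then trivial because the vectors are not collapsed onto $X$. The only point requiring attention is the choice of slack $\varepsilon^2/2$ in the application of property A, tuned so that the squared-norm estimate lands below $\varepsilon^2$; and I would note that uniform local finiteness plays no role here beyond ensuring that property A is defined, since each $A_x$ is finite by hypothesis.
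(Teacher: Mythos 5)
Your proposal is correct and follows essentially the same route as the paper's proof: both take $\xi_x=|A_x|^{-1/2}\chi_{A_x}$ for a family $\{A_x\}$ obtained from property A with quadratic slack in $\varepsilon$, and bound $\|\xi_x-\xi_y\|_2^2=2-2\langle\xi_x,\xi_y\rangle$ by an elementary lower estimate on the inner product. The only difference is bookkeeping: the paper invokes property A at slack $\varepsilon^2$ and uses $\sqrt{|A_x||A_y|}\leq\frac{|A_x|+|A_y|}{2}$ together with $|A_x|+|A_y|=2|A_x\cap A_y|+|A_x\Delta A_y|$, while you invoke it at slack $\varepsilon^2/2$ and use the symmetric reduction $\sqrt{|A_x||A_y|}\leq\max(|A_x|,|A_y|)$ --- both yield $\|\xi_x-\xi_y\|_2<\varepsilon$.
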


\begin{proof}
Given $\varepsilon>0$, $r\in(0,1)$, and $t>0$, there exist $R\in(0,1)$, $T>0$, and a family $\{A_x\}_{x\in X}$ of non-empty finite subsets of $X\times\mathbb{N}$ such that $A_x\subset B(x,R,T)\times\mathbb{N}$ for each $x\in X$, and $|A_x\Delta A_y|<\varepsilon^2|A_x\cap A_y|$ whenever $M(x,y,t)>1-r$.

Write $\chi_{A_x}$ for the characteristic function of $A_x$, and set $\xi_x=|A_x|^{-1/2}\chi_{A_x}$ for each $x\in X$.
If $M(x,y,t)>1-r$, then \[ |A_x|+|A_y|=2|A_x\cap A_y|+|A_x\Delta A_y|< (2+\varepsilon^2)|A_x\cap A_y|, \]
so \[ \langle\xi_x,\xi_y\rangle=\frac{|A_x\cap A_y|}{\sqrt{|A_x| |A_y|}}\geq\frac{2|A_x\cap A_y|}{|A_x|+|A_y|}>\frac{2}{2+\varepsilon^2}, \]
and $||\xi_x-\xi_y||_2^2=\langle \xi_x-\xi_y,\xi_x-\xi_y \rangle=2(1-\langle \xi_x,\xi_y \rangle)<\frac{2\varepsilon^2}{2+\varepsilon^2}<\varepsilon^2$.
\end{proof}

\begin{thm} \label{CE}
A uniformly locally finite fuzzy metric space with property A coarsely embeds into a (separable) Hilbert space.
\end{thm}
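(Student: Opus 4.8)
The plan is to adapt Yu's classical argument to the fuzzy setting, building the embedding by stacking the Hilbert-space maps supplied by Lemma \ref{lem:Ace} across a sequence of scales. Fix a basepoint $x_0\in X$. For each $n$ I would invoke Lemma \ref{lem:Ace} to obtain a map $\xi^{(n)}:X\to\ell^2(X\times\mathbb{N})$ of unit vectors that is controlled at a closeness level indexed by $n$ and whose supports lie in fuzzy balls $B(x,R_n,T_n)\times\mathbb{N}$. I would then assemble these into a single map $f:X\to H$, where $H=\bigoplus_n\ell^2(X\times\mathbb{N})$ is the $\ell^2$-direct sum (a separable Hilbert space, since $X$ is countable), by setting
\[ f(x)=\bigoplus_n\left(\xi^{(n)}_x-\xi^{(n)}_{x_0}\right). \]
Subtracting the basepoint makes it cancel in every difference, so $\|f(x)-f(y)\|^2=\sum_n\|\xi^{(n)}_x-\xi^{(n)}_y\|^2$, and the whole problem reduces to bounding this sum above (uniform expansiveness) and below (effective properness).

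The crucial and most delicate point is the choice of scales, where the fuzzy setting genuinely departs from the metric one: there is no single distance, and $M(x,y,t)$ need not tend to $1$ as $t\to\infty$, so the metric trick ``$d(x,y)\le n$ for all large $n$'' has no direct analogue. I would instead apply Lemma \ref{lem:Ace} at scale $n$ with tolerance $\varepsilon_n=2^{-n}$, time $t_n=n$, and threshold $1-r_n=1/n$ (for $n\ge 2$), so that $\|\xi^{(n)}_x-\xi^{(n)}_y\|_2<2^{-n}$ whenever $M(x,y,n)>1/n$. The observation that unlocks everything is that $M(x,y,t)\ge M(x,y,1)>0$ is bounded below by a strictly positive constant, by axiom (1) together with monotonicity of $M(x,y,\cdot)$; hence, uniformly over all pairs satisfying a fixed closeness condition $M(x,y,t)>1-r$, one has $M(x,y,n)>1/n$ for every $n$ exceeding a bound depending only on $(r,t)$. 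This at once shows that $f(x)\in H$ (the tail $\sum_n\|\xi^{(n)}_x-\xi^{(n)}_{x_0}\|^2$ is dominated by $\sum_n4^{-n}$) and that $f$ is uniformly expansive: if $M(x,y,t)>1-r$, all but a uniformly bounded number of terms are below $4^{-n}$, so $\|f(x)-f(y)\|$ is bounded by a constant depending only on $(r,t)$.

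For effective properness I would use the support condition together with orthogonality. At scale $m$, if the balls $B(x,R_m,T_m)$ and $B(y,R_m,T_m)$ are disjoint then $\xi^{(m)}_x$ and $\xi^{(m)}_y$ have disjoint supports, whence $\langle\xi^{(m)}_x,\xi^{(m)}_y\rangle=0$ and $\|\xi^{(m)}_x-\xi^{(m)}_y\|^2=2$. A common point of the two balls would force $M(x,y,2T_m)\ge(1-R_m)*(1-R_m)$, a quantity that is strictly positive exactly because $*$ has no zero divisors; so whenever $M(x,y,2T_m)<(1-R_m)*(1-R_m)$ the balls are disjoint and scale $m$ contributes $2$. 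Thus $\|f(x)-f(y)\|^2\ge 2\,\#\{m:M(x,y,2T_m)<(1-R_m)*(1-R_m)\}$. Given $R>0$, if $\|f(x)-f(y)\|\le R$ then at most $R^2/2$ scales can be ``far'', so among the first $K=\lceil R^2/2\rceil+1$ scales at least one, say $m^\ast$, is not far, giving $M(x,y,2T_{m^\ast})\ge(1-R_{m^\ast})*(1-R_{m^\ast})$. Setting $t'=2\max_{m\le K}T_m$ and $D=\min_{m\le K}(1-R_m)*(1-R_m)$, which lie in $(0,\infty)$ and $(0,1)$ respectively and depend only on $R$, monotonicity of $M(x,y,\cdot)$ yields $M(x,y,t')\ge M(x,y,2T_{m^\ast})\ge D$. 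This is precisely effective properness for a map into the standard fuzzy metric space of $H$, in the form given by the lemma preceding Lemma \ref{lem:Ace}; combined with uniform expansiveness it shows $f$ is a coarse embedding in the sense of Definition \ref{ceFM}.

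The main obstacle, as flagged above, is that the metric proof's reliance on ``$d(x,y)\le n$ eventually'' has no fuzzy counterpart; the resolution is to use that $M(x,y,1)$ is a uniform positive lower bound and to let the thresholds $1-r_n$ decay to $0$ while the times $t_n$ increase. After this scaling is chosen correctly, no individual estimate is hard, and the remaining effort is the bookkeeping needed to make both controls genuinely uniform: an upper bound valid over all pairs close at a fixed level, and a lower bound that converts an $\ell^2$-distance cutoff $R$ into a single fuzzy-closeness pair $(D,t')$.
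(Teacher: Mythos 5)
Your proposal is correct and takes essentially the same route as the paper: the same map $F(x)=\bigoplus_n(\xi^n_x-\xi^n_z)$ assembled from Lemma \ref{lem:Ace} at scales $\varepsilon_n=2^{-n}$, $t_n=n$, $r_n\to 1$, with monotonicity of $M(x,y,\cdot)$ giving membership in the direct sum and uniform expansiveness, and disjointness of supports at ``far'' scales giving $\|\xi^m_x-\xi^m_y\|^2=2$ and hence effective properness (your uniform choice of $D=\min_{m\le K}(1-R_m)*(1-R_m)$ and $t'=2\max_{m\le K}T_m$ over the first $K=\lceil R^2/2\rceil+1$ scales is in fact a slightly cleaner rendering of the paper's pair-dependent $N=\max\{\cdots\}$ step). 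One caveat of wording only: $M(x,y,1)$ admits no uniform positive lower bound over all pairs of points, but your argument never actually needs one---per pair it suffices to show $F(x)\in H$, and on the set of pairs with $M(x,y,t)>1-r$ the uniform bound is $1-r$ itself---so the proof stands as written.
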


\begin{proof}
For each $n\in\mathbb{N}$, choose $\xi^n$ as in Lemma \ref{lem:Ace} corresponding to $\varepsilon=2^{-n}$, $t=n$, and $r_n\in(0,1)$ with $r_n$ increasing to 1.
Let $R_n\in(0,1)$ and $T_n>0$ be such that $\mathrm{supp}(\xi_x^n)\subseteq B(x,R_n,T_n)\times\mathbb{N}$.
Then $||\xi_x^n-\xi_y^n||_2=\sqrt{2}$ when $M(x,y,2T_n)<(1-R_n)*(1-R_n)$. 

Fix $z\in X$, and define a map $F:X\rightarrow\bigoplus_{n=1}^\infty\ell^2(X\times\mathbb{N})$ by
\[ F(x)=\bigoplus_{n=1}^\infty(\xi_x^n-\xi_z^n). \]
Suppose $x\neq y$, and let $M=\sup_nM(x,y,n)$.
There exists $N$ such that $r_n>1-\frac{M}{2}$ for all $n> N$.
There exists $N'$ such that $M(x,y,n)>\frac{M}{2}$ for all $n> N'$.
Hence, $1-r_n<\frac{M}{2}<M(x,y,n)$ for all $n> N''=\max(N,N')$, so $||\xi_x^n-\xi_y^n||_2<2^{-n}$ for all $n> N''$.
It follows that
\begin{align*}
||F(x)-F(y)||_2^2 &= \sum_{n=1}^\infty||\xi_x^n-\xi_y^n||_2^2 \\
&\leq \sum_{n=1}^{N''}||\xi_x^n-\xi_y^n||_2^2 + \sum_{n=N''+1}^\infty 2^{-n} \\
&< 4N''+1. 
\end{align*}
In the case where $y=z$, the calculation above shows that $F(x)$ indeed belongs to $\bigoplus_{n=1}^\infty\ell^2(X\times\mathbb{N})$.

Given $R>0$, if $||F(x)-F(y)||\leq R$, then the set $\{ n\in\mathbb{N}:M(x,y,2T_n)<(1-R_n)*(1-R_n) \}$ has at most $R^2/2$ elements. Letting $N=\max\{ n\in\mathbb{N}:M(x,y,2T_n)<(1-R_n)*(1-R_n) \}$, we have $M(x,y,2T_{N+1})\geq(1-R_{N+1})*(1-R_{N+1})$, so $F$ is effectively proper.

On the other hand, let $A>0$ and $t>0$. If $A\geq 1$ and $M(x,y,t)\geq A$, then $x=y$, so assume $A\in(0,1)$. 
We may then choose $n\geq t$ such that $r_n\geq 1-A$. If $M(x,y,t)\geq A$, then $M(x,y,n)\geq M(x,y,t)\geq A\geq 1-r_n$, so $||F(x)-F(y)||\leq 2^{-n}$, and $F$ is uniformly expansive.
\end{proof}


Examples of uniformly locally finite metric spaces without property A and are coarsely embeddable into Hilbert space were constructed in \cite{AGS}, so the standard fuzzy metric spaces corresponding to these metric spaces also do not have property A but are coarsely embeddable into Hilbert space.

\appendix
\section{Coarse Spaces}

In this appendix, we collect statements about general coarse spaces and relate them to the context of fuzzy metric spaces in connection with the content in the main body of this paper.

Given a set $X$, and subsets $E,E_1,E_2\subseteq X\times X$, we define
\begin{align*}
E^{-1} &= \{(y,x)\in X\times X:(x,y)\in E\}, \\
E_1\circ E_2 &= \{(x,y)\in X\times X: \exists z\in X, (x,z)\in E_1,(z,y)\in E_2\}.
\end{align*}

\begin{defn} \cite[Definition 2.3]{RoeL}
A coarse structure on a set $X$ is a collection $\mathcal{E}$ of subsets of $X\times X$ satisfying
\begin{itemize}
\item $\{(x,x):x\in X\}\in\mathcal{E}$,
\item If $E\in\mathcal{E}$, then $E^{-1}\in\mathcal{E}$,
\item If $E_1,E_2\in\mathcal{E}$, then $E_1\circ E_2\in\mathcal{E}$,
\item If $E_1,E_2\in\mathcal{E}$, then $E_1\cup E_2\in\mathcal{E}$,
\item If $E_1\in\mathcal{E}$ and $E_2\subseteq E_1$, then $E_2\in\mathcal{E}$.
\end{itemize}
The pair $(X,\mathcal{E})$ is called a coarse space.
\end{defn}

Given a fuzzy metric space $(X,M,*)$, assuming that $a*b\neq 0$ whenever $a\neq 0$ and $b\neq 0$, the collection of all subsets $E\subseteq X\times X$ for which there exist $r\in(0,1)$ and $t>0$ such that $M(x,y,t)>1-r$ for all $(x,y)\in E$ forms a coarse structure on $X$. We will call this the bounded coarse structure and denote it by $\mathcal{E}_b$.

\begin{defn} \cite[Definitions 9.1 and 9.4]{RoeL}
Let $(X,\mathcal{E})$ be a coarse space. 
\begin{enumerate}
\item A family $\{D_i\}_{i\in I}$ of subsets of $X$ is said to be uniformly bounded if $\bigcup_{i\in I}D_i\times D_i\in\mathcal{E}$.
\item Given $E\in\mathcal{E}$, a family $\{D_i\}_{i\in I}$ of subsets of $X$ is $E$-disjoint if $(D_i\times D_j)\cap E=\emptyset$ whenever $i\neq j$.
\item $(X,\mathcal{E})$ has asymptotic dimension at most $n$ if for every $E\in\mathcal{E}$, $X$ can be covered by at most $n+1$ $E$-disjoint families $X_0,\ldots,X_n$ of uniformly bounded subsets of $X$. 
\end{enumerate}
\end{defn}

When a fuzzy metric space is equipped with its bounded coarse structure, one sees that the definition above agrees with the definition of asymptotic dimension for fuzzy metric spaces in \cite[Definition 3.8]{Grz}.

\begin{prop}
A fuzzy metric space $(X,M,*)$ has asymptotic dimension (at most) $n$ if and only if the coarse space $(X,\mathcal{E}_b)$ has asymptotic dimension (at most) $n$.
\end{prop}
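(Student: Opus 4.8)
The plan is to prove the equivalence by setting up a dictionary between entourages of the bounded coarse structure $\mathcal{E}_b$ and the scales $(r,t)$ appearing in the fuzzy definition of asymptotic dimension. For $r\in(0,1)$ and $t>0$ write $E_{r,t}=\{(x,y)\in X\times X:M(x,y,t)>1-r\}$. The key structural fact, immediate from the definition of $\mathcal{E}_b$, is that $E_{r,t}\in\mathcal{E}_b$ for every such pair, while conversely every $E\in\mathcal{E}_b$ satisfies $E\subseteq E_{r,t}$ for some $(r,t)$. Under this correspondence the two notions of uniform boundedness of a family coincide exactly: a family $\{D_i\}$ has $\bigcup_i D_i\times D_i\in\mathcal{E}_b$ precisely when there exist $r,t$ with $M(x,y,t)>1-r$ for all $x,y\in D_i$ and all $i$, which is the fuzzy notion. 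Since $\mathcal{E}_b$ is closed under finite unions, the individual bounds of finitely many uniformly bounded families can always be amalgamated into a single entourage, so a uniform bound for $n+1$ families is the same as a uniform bound for their union, matching the single bound demanded for $\bigcup_i\mathcal{U}^i$ on the fuzzy side.

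Next I would record how disjointness translates. A family that is $E_{r,t}$-disjoint has the property that distinct members $U,U'$ contain no pair $(x,y)$ with $M(x,y,t)>1-r$, i.e.\ $M(x,y,t)\leq 1-r$ throughout $U\times U'$. This is formally weaker than $(r,t)$-disjointness, which requires the strict inequality $M(U,U',t)<1-r$. Reconciling this strict-versus-nonstrict discrepancy is the only real obstacle, and it is resolved by perturbing the parameter: if $r'\in(r,1)$ then $M(x,y,t)\leq 1-r'<1-r$ on $U\times U'$, so every $E_{r',t}$-disjoint family is automatically $(r,t)$-disjoint. In the other direction the implication is free, since $M(x,y,t)<1-r$ already rules out $M(x,y,t)>1-r$, so an $(r,t)$-disjoint family is $E_{r,t}$-disjoint.

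With this dictionary both implications are short transcriptions. For the forward direction, assuming $asdim(X)\leq n$ in the fuzzy sense, I would take an arbitrary $E\in\mathcal{E}_b$, fix $(r,t)$ with $E\subseteq E_{r,t}$, and apply the fuzzy definition at scale $(r,t)$ to obtain $(r,t)$-disjoint uniformly bounded families $\mathcal{U}^0,\ldots,\mathcal{U}^n$ covering $X$; these are then $E_{r,t}$-disjoint, hence $E$-disjoint, and uniformly bounded in the coarse sense, giving $asdim(X,\mathcal{E}_b)\leq n$. For the converse, given a fuzzy scale $(r,t)$ I would choose $r'\in(r,1)$, apply $asdim(X,\mathcal{E}_b)\leq n$ to the entourage $E_{r',t}\in\mathcal{E}_b$ to produce $E_{r',t}$-disjoint uniformly bounded families, and invoke the perturbation remark to conclude they are $(r,t)$-disjoint, with their union uniformly bounded in the fuzzy sense. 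Since the argument proves the equivalence of ``$asdim\leq n$'' for every $n$, the statement for the exact value follows at once, and I expect the handling of the strict inequality in the disjointness condition to be the only delicate point.
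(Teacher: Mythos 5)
Your proof is correct and follows the route the paper itself intends: the paper states this proposition without any written proof, treating it as immediate from the remark that, under $\mathcal{E}_b$, the coarse-space notions of uniform boundedness and disjointness specialize to the fuzzy ones, and your argument is exactly that routine translation carried out in detail (including the amalgamation of the $n+1$ uniform bounds via closure of $\mathcal{E}_b$ under finite unions, and the cofinality of the sets $E_{r,t}$ in $\mathcal{E}_b$). Your perturbation $r\mapsto r'\in(r,1)$ correctly settles the one genuinely delicate point that the paper glosses over, namely that fuzzy $(r,t)$-disjointness demands the strict bound $M(U,U',t)<1-r$ on the supremum, whereas $E_{r,t}$-disjointness only yields $M(x,y,t)\leq 1-r$ pointwise.
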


A coarse space $(X,\mathcal{E})$ is said to be uniformly locally finite if for each $E\in\mathcal{E}$, we have $\sup_{x\in X}|\{ y\in X:(y,x)\in E \}|<\infty$. 

When a fuzzy metric space is equipped with its bounded coarse structure, this notion of uniform local finiteness agrees with that in Definition \ref{ulfFM}.

\begin{defn} \cite[Definition 2.7]{Sako} \label{Acoarsesp}
A uniformly locally finite coarse space $(X,\mathcal{E})$ has property A if for every $\varepsilon>0$ and every $E\in\mathcal{E}$, there exist $F\in\mathcal{E}$ and $A\subset F\times\mathbb{N}$ such that
\begin{enumerate}
\item the set $A_x=\{(y,n)\in X\times\mathbb{N}:(x,y,n)\in A\}$ is finite and non-empty for each $x\in X$,
\item $|A_x\Delta A_y|<\varepsilon|A_x\cap A_y|$ for each $(x,y)\in E$.
\end{enumerate}
\end{defn}

In \cite[Theorem 3.1]{Sako}, this definition was shown to be equivalent to the one given by Roe in \cite[Definition 11.35]{RoeL}.
We also note that the definition still makes sense without uniform local finiteness, and this was considered in the doctoral thesis of Bunn \cite[Definition 5.5]{Bunn}.

\begin{prop} \label{Aequiv}
A uniformly locally finite fuzzy metric space $(X,M,*)$ has property A if and only if the coarse space $(X,\mathcal{E}_b)$ has property A.
\end{prop}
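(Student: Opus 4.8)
The plan is to set up a dictionary between the controlled sets of $\mathcal{E}_b$ and the threshold pairs $(r,t)$ appearing in the fuzzy definition of property A, and then to match the two statements quantifier by quantifier. For $r\in(0,1)$ and $t>0$ I write $E_{r,t}=\{(x,y)\in X\times X:M(x,y,t)>1-r\}$. Each $E_{r,t}$ lies in $\mathcal{E}_b$ (witnessed by $r$ and $t$ themselves), and by the very definition of $\mathcal{E}_b$ every $E\in\mathcal{E}_b$ is contained in some $E_{r,t}$; thus the family $\{E_{r,t}\}$ is cofinal in $\mathcal{E}_b$. The $x$-section $\{y:(x,y)\in E_{r,t}\}$ is exactly the ball $B(x,r,t)$, and by symmetry of $M$ the set $E_{r,t}$ is symmetric, so the condition $(x,y)\in E_{r,t}$ is literally the condition $M(x,y,t)>1-r$. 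Finally, a family $\{A_x\}_{x\in X}$ of subsets of $X\times\mathbb{N}$ corresponds to the set $A=\{(x,y,n):(y,n)\in A_x\}\subseteq X\times X\times\mathbb{N}$ whose $x$-sections are the $A_x$; under this correspondence the support condition $A_x\subseteq B(x,r',t')\times\mathbb{N}$ is precisely $A\subseteq E_{r',t'}\times\mathbb{N}$, putting $A$ into the shape required by Definition \ref{Acoarsesp} with $F=E_{r',t'}$.

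For the forward direction I would start from $\varepsilon>0$ and $E\in\mathcal{E}_b$, choose $(r,t)$ with $E\subseteq E_{r,t}$ by cofinality, and feed $\varepsilon,r,t$ into fuzzy property A to obtain $r',t'$ and a family $\{A_x\}$ supported in the balls $B(x,r',t')$ with the F\o{}lner-type estimate for all pairs satisfying $M(x,y,t)>1-r$. Taking $F=E_{r',t'}\in\mathcal{E}_b$ and $A$ as above supplies exactly the data demanded by the coarse definition: the sections $A_x$ are finite and non-empty, $A\subseteq F\times\mathbb{N}$, and the estimate $|A_x\Delta A_y|<\varepsilon|A_x\cap A_y|$ holds in particular for every $(x,y)\in E\subseteq E_{r,t}$.

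For the reverse direction I would run the same dictionary backwards: given $\varepsilon,r,t$, apply coarse property A to the single controlled set $E=E_{r,t}$ to obtain $F\in\mathcal{E}_b$ and $A\subseteq F\times\mathbb{N}$ with the stated properties. Choosing $(r',t')$ with $F\subseteq E_{r',t'}$, again by cofinality, yields $A_x\subseteq B(x,r',t')\times\mathbb{N}$, while the coarse estimate for pairs in $E=E_{r,t}$ is exactly the fuzzy estimate for pairs with $M(x,y,t)>1-r$. Hence the family $\{A_x\}$ witnesses fuzzy property A for the data $\varepsilon,t,r$.

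There is no genuinely hard step here, since the statement is a translation between two equivalent bookkeeping schemes; but I would be careful about the one place where the two formulations are not verbatim identical. The coarse definition fixes a single input set $E$ and a single output set $F$, whereas the fuzzy definition ranges over thresholds $(r,t)$ and $(r',t')$. The cofinality of $\{E_{r,t}\}$ in $\mathcal{E}_b$ is what reconciles these, and the identification of $B(x,r,t)$ with the $x$-section of $E_{r,t}$ is what converts the support condition back and forth. These two observations, both immediate from the definition of the bounded coarse structure, are the only points I would state explicitly before the quantifier-matching becomes mechanical.
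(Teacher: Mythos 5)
Your proposal is correct and follows essentially the same route as the paper's proof: both directions are the same quantifier-matching translation, using the fact that every $E\in\mathcal{E}_b$ is contained in some $E_{r,t}$ and that the $x$-section of $E_{r,t}$ is the ball $B(x,r,t)$. The only cosmetic difference is that in the forward direction the paper takes $F$ to be the exact ``shadow'' $\{(x,y):\exists n,\,(y,n)\in A_x\}$ of the family $\{A_x\}$ rather than your slightly larger $E_{r',t'}$, which changes nothing since both lie in $\mathcal{E}_b$.
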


\begin{proof}
Suppose $(X,M,*)$ has property A.
Given $\varepsilon>0$ and $E\in\mathcal{E}_b$, there exist $r\in(0,1)$ and $t>0$ such that $M(x,y,t)>1-r$ for all $(x,y)\in E$, so $|A_x\Delta A_y|<\varepsilon|A_x\cap A_y|$ for each $(x,y)\in E$.
There also exist $r'\in(0,1)$, $t>0$, and a family $\{A_x\}_{x\in X}$ of non-empty finite subsets of $X\times\mathbb{N}$ such that $A_x\subset B(x,r',t')\times\mathbb{N}$ for each $x\in X$.

Define $F=\{(x,y)\in X\times X:\exists n\in\mathbb{N},(y,n)\in A_x\}$. If $(x,y)\in F$, then $M(x,y,t')>1-r'$, so $F\in\mathcal{E}_b$.
Finally, define $A\subset F\times\mathbb{N}$ by the condition that $(x,y,n)\in A$ if and only if $(y,n)\in A_x$. This shows that $(X,\mathcal{E}_b)$ has property A.

Conversely, suppose $(X,\mathcal{E}_b)$ has property A.
Given $\varepsilon>0$, $r\in(0,1)$, and $t>0$, we have $B(x,r,t)\in\mathcal{E}_b$, so there exist $F\in\mathcal{E}_b$ and $A\subset F\times\mathbb{N}$ satisfying (i) and (ii) in Definition \ref{Acoarsesp}.
Now there exist $r'\in(0,1)$ and $t'>0$ such that $M(x,y,t')>1-r'$ for all $(x,y)\in F$, so $A_x$ in Definition \ref{Acoarsesp}(i) is contained in $B(x,r',t')\times\mathbb{N}$, and $|A_x\Delta A_y|<\varepsilon|A_x\cap A_y|$ for each $(x,y)\in B(x,r,t)$. This shows that $(X,M,*)$ has property A.
\end{proof}

Given a coarse space $(X,\mathcal{E})$, a subset $B$ of $X$ is said to be bounded if $B\times B\in\mathcal{E}$.
Given a coarse space $(X,\mathcal{E})$ and a set $S$, two maps $f,f':S\rightarrow X$ are said to be close if $\{(f(s),f'(s)):s\in S\}\in\mathcal{E}$.

\begin{defn} \cite[Definition 2.21]{RoeL}
Let $(X,\mathcal{E})$ and $(Y,\mathcal{F})$ be coarse spaces, and let $f:X\rightarrow Y$ be a map.
\begin{enumerate}
\item The map $f$ is proper if $f^{-1}(B)$ is bounded for each bounded subset $B$ of $Y$.
\item The map $f$ is bornologous if $(f\times f)(E)\in\mathcal{F}$ for each $E\in\mathcal{E}$.
\item The map $f$ is a coarse map if it is both proper and bornologous.
\item The spaces $(X,\mathcal{E})$ and $(Y,\mathcal{F})$ are coarsely equivalent if there exist coarse maps $f:X\rightarrow Y$ and $g:Y\rightarrow X$ such that $f\circ g$ and $g\circ f$ are close to the identity maps on $X$ and $Y$ respectively.
\end{enumerate}
\end{defn}

In the context of fuzzy metric spaces, proper and bornologous correspond respectively to effectively proper and uniformly expansive in Definition \ref{ceFM}.
The next proposition corresponds to Theorem \ref{invar} when applied to uniformly locally finite fuzzy metric spaces with their bounded coarse structures.
This result should be known to experts but we could not find a reference for it.

\begin{prop}
Let $(X,\mathcal{E})$ and $(Y,\mathcal{F})$ be uniformly locally finite coarse spaces. Let $f:X\rightarrow Y$ be a coarse equivalence.
Then $(X,\mathcal{E})$ has property A if and only if $(Y,\mathcal{F})$ does.
\end{prop}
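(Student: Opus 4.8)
The plan is to transcribe the proof of Theorem \ref{invar} into the language of coarse spaces. First I would reduce to a single implication: if $f\colon X\to Y$ is a coarse equivalence with coarse inverse $g\colon Y\to X$, then $g$ is itself a coarse equivalence (with coarse inverse $f$), so it suffices to prove that property A of $(X,\mathcal{E})$ implies property A of $(Y,\mathcal{F})$. Applying this implication to $f$ and then to $g$ yields the biconditional.

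So assume $(X,\mathcal{E})$ has property A, fix a coarse inverse $g\colon Y\to X$, and let $\varepsilon>0$ and $E\in\mathcal{F}$ be given. Since $g$ is bornologous, $(g\times g)(E)\in\mathcal{E}$, and property A of $X$ supplies an $F\in\mathcal{E}$ together with finite non-empty sets $A_x\subseteq\{x'\colon(x,x')\in F\}\times\mathbb{N}$ satisfying $|A_x\Delta A_{x'}|<\varepsilon|A_x\cap A_{x'}|$ whenever $(x,x')\in(g\times g)(E)$. I would then \emph{push these forward} along $f$: for $y,w\in Y$ set $n^y_w=|(f^{-1}(w)\times\mathbb{N})\cap A_{g(y)}|$ and define $B_y=\bigcup_{w\colon n^y_w\neq 0}\{w\}\times\{1,\ldots,n^y_w\}$. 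Because the slabs $f^{-1}(w)\times\mathbb{N}$ partition $X\times\mathbb{N}$, one gets $|B_y|=|A_{g(y)}|$, so every $B_y$ is finite and non-empty.

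The two conditions of Definition \ref{Acoarsesp} are then verified. For the support condition, if $(w,n)\in B_y$ then $w=f(x)$ for some $(x,m)\in A_{g(y)}$, whence $(g(y),x)\in F$; applying the bornologous map $f$ gives $(f(g(y)),w)\in(f\times f)(F)\in\mathcal{F}$, and combining this with $D=\{(y,f(g(y)))\colon y\in Y\}\in\mathcal{F}$ (which records that $f\circ g$ is close to $\mathrm{id}_Y$) places $(y,w)$ in the fixed set $G=D\circ(f\times f)(F)\in\mathcal{F}$, uniformly in $y$. For the Følner condition, fix $(y,y')\in E$, so that $(g(y),g(y'))\in(g\times g)(E)$, and write $a_w=n^y_w$, $b_w=n^{y'}_w$. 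Columnwise one has $|B_y\cap B_{y'}|=\sum_w\min(a_w,b_w)$ and $|B_y\Delta B_{y'}|=\sum_w|a_w-b_w|$, while on each slab the elementary bounds $|A_{g(y)}\cap A_{g(y')}\cap(f^{-1}(w)\times\mathbb{N})|\leq\min(a_w,b_w)$ and $|(A_{g(y)}\Delta A_{g(y')})\cap(f^{-1}(w)\times\mathbb{N})|\geq|a_w-b_w|$ hold. Summing over $w$ gives $|A_{g(y)}\cap A_{g(y')}|\leq|B_y\cap B_{y'}|$ and $|A_{g(y)}\Delta A_{g(y')}|\geq|B_y\Delta B_{y'}|$, so $|B_y\Delta B_{y'}|\leq|A_{g(y)}\Delta A_{g(y')}|<\varepsilon|A_{g(y)}\cap A_{g(y')}|\leq\varepsilon|B_y\cap B_{y'}|$, and encoding $\{B_y\}$ as a subset of $G\times\mathbb{N}$ exhibits property A for $(Y,\mathcal{F})$.

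The main obstacle lies not in any single hard estimate but in the bookkeeping of the pushforward: one must check that passing from $A_{g(y)}$ to $B_y$ (collapsing each fibre $f^{-1}(w)$ to the single index $w$ while preserving multiplicities) can only increase intersections and decrease symmetric differences, which is exactly the pair of slabwise inequalities above. Care is also needed to assemble the single controlled set $G$ from the bornologousness of both $f$ and $g$ and the closeness of $f\circ g$ to the identity, so that the support bound is uniform in $y$; none of these steps requires uniform local finiteness beyond what is already built into the definition of property A.
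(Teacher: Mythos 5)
Your proposal is correct and follows essentially the same route as the paper's proof: push the sets $A_{g(y)}$ forward along $f$ by counting multiplicities on the fibres $f^{-1}(w)$, control the supports via the composition of $\{(y,f(g(y))):y\in Y\}$ with $(f\times f)$ of the controlled set from property A, and transfer the F\o{}lner estimate through the inequalities $|B_y\cap B_{y'}|\geq|A_{g(y)}\cap A_{g(y')}|$ and $|B_y\Delta B_{y'}|\leq|A_{g(y)}\Delta A_{g(y')}|$. Your slabwise $\min(a_w,b_w)$ and $|a_w-b_w|$ computation spells out a step the paper states only tersely, and your explicit reduction to one implication via the symmetry of coarse equivalence is the same implicit move the paper makes.
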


\begin{proof}
Assume that $f:X\rightarrow Y$ is a coarse equivalence, and $g:Y\rightarrow X$ is a coarse map such that $f\circ g$ and $g\circ f$ are close to the identity maps on $X$ and $Y$ respectively.
Assume that $(X,\mathcal{E})$ has property A, and let $\varepsilon>0$ and $F\in\mathcal{F}$ be given.

Since $g$ is bornologous, we have $(g\times g)(F)\in\mathcal{E}$.
From the definition of property A, there exist $E\in\mathcal{E}$ and $A\subset E\times\mathbb{N}$ such that
\begin{enumerate}
\item the set $A_x=\{(x',n)\in X\times\mathbb{N}:(x,x',n)\in A\}$ is finite and non-empty for each $x\in X$,
\item $|A_x\Delta A_{x'}|<\varepsilon|A_x\cap A_{x'}|$ for each $(x,x')\in (g\times g)(F)$.
\end{enumerate}

Fix $y_0\in Y$. For each $y\in Y$, let $n_y=|(f^{-1}(y)\times\mathbb{N})\cap A_{g(y_0)}|$, and define
\[ B_{y_0}=\bigcup_{y\in Y,n_y\neq 0}\{(y,1),(y,2),\ldots,(y,n_y)\}.\]
The sets $(f^{-1}(y)\times\mathbb{N})\cap A_{g(y_0)}$ partition $A_{g(y_0)}$, so $|B_{y_0}|=|A_{g(y_0)}|<\infty$.
This produces a family $\{B_y\}_{y\in Y}$ of non-empty finite subsets of $Y\times\mathbb{N}$.

Define $B$ by the condition that $(y,y',n)\in B$ if and only if $(y',n)\in B_y$.
If $(y',n)\in B_y$, then there exists $(x,m)\in(f^{-1}(y')\times\mathbb{N})\cap A_{g(y)}$.
In particular, $(g(y),x,m)\in A\subset E\times\mathbb{N}$.
Since $f$ is bornologous, $(f(g(y)),y')$ belongs to $F_1=(f\times f)(E)\in\mathcal{F}$.
Since $f\circ g$ is close to the identity map on $Y$, we have $F_2=\{(y,f(g(y))):y\in Y\}\in\mathcal{F}$.
Hence, $(y,y')$ belongs to $F_2\circ F_1\in\mathcal{F}$, and $B\subset (F_2\circ F_1)\times\mathbb{N}$.

If $(x,m)\in A_{g(y)}$ for some $m$, then $(f(x),1)\in B_y$, and it follows that $|A_{g(y)}\cap A_{g(y')}|\leq |B_y\cap B_{y'}|$ and $|A_{g(y)}\Delta A_{g(y')}|\geq|B_y\Delta B_{y'}|$ for $y,y'\in Y$.
Hence, $|B_y\Delta B_{y'}|<\varepsilon|B_y\cap B_{y'}|$ whenever $(y,y')\in F$.
Therefore $(Y,\mathcal{F})$ has property A.
\end{proof}

\begin{thm} \cite[Remark 11.36(ii)]{RoeL}
If a uniformly locally finite coarse space has finite asymptotic dimension, then it has property A.
\end{thm}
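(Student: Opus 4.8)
The plan is to mirror the proof of Theorem \ref{thm:asdim}, reading the balls $B(x,r,t)$ there as entourage-neighbourhoods and the $(r,t)$-chains as $E$-chains. Fix $\varepsilon>0$ and $E\in\mathcal{E}$; after replacing $E$ by $E\cup E^{-1}\cup\{(x,x):x\in X\}$ I may assume that $E$ is symmetric and contains the diagonal. Set $L=\lfloor 2+(2n+1)/\varepsilon\rfloor$, which will serve as the cap on chain lengths exactly as in Theorem \ref{thm:asdim}. The goal is to produce a family $\{A_x\}_{x\in X}$ witnessing Definition \ref{Acoarsesp}.

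The main obstacle is that the coarse analogue of the Lebesgue cover supplied by \cite[Theorem 4.5]{Grz} in the fuzzy setting is not available off the shelf, so the first step is to manufacture it from the disjoint-family definition of asymptotic dimension. Applying $asdim(X,\mathcal{E})\leq n$ to the entourage $E^{\circ 2L}$ produces $n+1$ uniformly bounded, $E^{\circ 2L}$-disjoint families $X_0,\dots,X_n$ covering $X$. I would then thicken each member $D$ to its $E^{\circ L}$-neighbourhood $D^{+}=\{z\in X:(z,d)\in E^{\circ L}\text{ for some }d\in D\}$ and set $\mathcal{U}=\{D^{+}:D\in X_i,\ 0\leq i\leq n\}$. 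Three points need checking: that $\mathcal{U}$ is uniformly bounded, since the $E^{\circ L}$-thickening of a uniformly bounded family is again uniformly bounded (its bound is dominated by $E^{\circ L}\circ E_0\circ E^{\circ L}$, where $E_0$ bounds the original cover); that within each family the thickened sets remain pairwise disjoint, because a point of $D^{+}\cap (D')^{+}$ would place a point of $D\times D'$ into $E^{\circ 2L}$ by symmetry of $E$, contradicting $E^{\circ 2L}$-disjointness, whence $\mathcal{U}$ has multiplicity at most $n+1$; and that for every $x$ the neighbourhood $E^{\circ L}_x=\{y:(x,y)\in E^{\circ L}\}$ lies in a single member of $\mathcal{U}$, namely $D^{+}$ for any $D\ni x$. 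This last point is the coarse replacement for the Lebesgue pair.

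With $\mathcal{U}$ in hand the remainder copies Theorem \ref{thm:asdim}. Choosing a base point $a_U\in U$ for each $U\in\mathcal{U}$, I would let $l_U(x)$ be the length of the shortest $E$-chain from $x$ to $X\setminus U$, capped at $L$ (and equal to $L$ when no such chain exists), and put $A_x=\bigcup_{U\ni x}\{a_U\}\times\{1,\dots,l_U(x)\}$. Finiteness and non-emptiness of $A_x$ follow from multiplicity at most $n+1$ and the cap $L$, while $A_x\subseteq F\times\mathbb{N}$ with $F$ the uniform bound of $\mathcal{U}$ since $x,a_U\in U$. For the two key estimates, suppose $(x,y)\in E$: prepending $x$ to an $E$-chain out of $y$ and $y$ to one out of $x$ gives $|l_U(x)-l_U(y)|\leq 1$, and forces $l_U(x)=1$ whenever $x\in U\not\ni y$; since $(x,y)\in E\subseteq E^{\circ L}$ the points $x$ and $y$ share a common member of $\mathcal{U}$, so at most $2n+1$ members contain $x$ or $y$ and therefore $|A_x\Delta A_y|\leq 2n+1$. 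On the other hand, taking $U_0\in\mathcal{U}$ with $E^{\circ L}_x\subseteq U_0$ and using $E^{\circ m}\subseteq E^{\circ L}$ for $m\leq L$, no $E$-chain of length less than $L$ can leave $U_0$, so $l_{U_0}(x)=L$ and hence $l_{U_0}(y)\geq L-1$; this yields $|A_x\cap A_y|\geq L-1>(2n+1)/\varepsilon$ and so $|A_x\Delta A_y|/|A_x\cap A_y|<\varepsilon$. Setting $A=\{(x,y,n):(y,n)\in A_x\}\subseteq F\times\mathbb{N}$ then exhibits property A in the sense of Definition \ref{Acoarsesp}. I expect the only genuinely new work to be the thickening construction of the Lebesgue-type cover in the second paragraph; everything else is a direct transcription of Theorem \ref{thm:asdim}.
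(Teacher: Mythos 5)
Your proposal is correct and takes essentially the same route as the paper: the paper states this theorem only by citation to \cite[Remark 11.36(ii)]{RoeL}, but its own proof of Theorem \ref{thm:asdim} (following \cite{CDV}) is precisely the chain-length argument you transcribe, and the key estimates ($|l_U(x)-l_U(y)|\leq 1$, $|A_x\Delta A_y|\leq 2n+1$, and $|A_x\cap A_y|\geq L-1>(2n+1)/\varepsilon$, the last using $\lfloor u\rfloor>u-1$) carry over verbatim, with capping $l_U$ at $L$ harmless since truncation by $\min(\cdot,L)$ is $1$-Lipschitz. The one genuinely new ingredient, your thickening construction replacing the Lebesgue-cover lemma \cite[Theorem 4.5]{Grz}, is sound: symmetry of $E$ makes $E^{\circ L}$ symmetric, so a point of $D^{+}\cap(D')^{+}$ produces a pair in $(D\times D')\cap E^{\circ 2L}$, contradicting $E^{\circ 2L}$-disjointness; the containment $\bigcup_{D}D^{+}\times D^{+}\subseteq E^{\circ L}\circ E_0\circ E^{\circ L}$ keeps the cover uniformly bounded; and the diagonal being in $E$ gives $E^{\circ m}\subseteq E^{\circ L}$ for $m\leq L$, which yields both the Lebesgue property $E^{\circ L}_x\subseteq D^{+}$ and the fact that $x$ and $y$ share the member $U_0$, as needed for the $2n+1$ bound.
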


This result still holds if one omits the condition of uniform local finiteness from the definition of property A \cite[Theorem 6.4]{Bunn}.

%
%

\begin{thm} \cite[Theorem 11.16(c) and Lemma 11.37]{RoeL}
If a uniformly locally finite coarse space has property A, then it is coarsely embeddable into a Hilbert space.
\end{thm}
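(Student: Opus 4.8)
The plan is to mirror the proof of Theorem~\ref{CE}, transporting the construction from the fuzzy setting to an arbitrary uniformly locally finite coarse space $(X,\mathcal E)$ by replacing the sublevel sets $\{(x,y):M(x,y,t)>1-r\}$ with entourages $E\in\mathcal E$. First I would extract from property A, exactly as in Lemma~\ref{lem:Ace}, a supply of normalized Hilbert-space vectors. Given $\varepsilon>0$ and $E\in\mathcal E$, Definition~\ref{Acoarsesp} yields $F\in\mathcal E$ and a family $\{A_x\}_{x\in X}$ of non-empty finite subsets of $X\times\mathbb N$ with $A_x\subseteq\{y:(x,y)\in F\}\times\mathbb N$ and $|A_x\Delta A_y|<\varepsilon^2|A_x\cap A_y|$ for $(x,y)\in E$. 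Setting $\xi_x=|A_x|^{-1/2}\chi_{A_x}\in\ell^2(X\times\mathbb N)$ gives unit vectors with $\|\xi_x-\xi_y\|_2<\varepsilon$ when $(x,y)\in E$, together with the orthogonality property that $\langle\xi_x,\xi_y\rangle\neq0$ forces $A_x\cap A_y\neq\emptyset$ and hence $(x,y)\in F\circ F^{-1}$; thus $\|\xi_x-\xi_y\|_2=\sqrt2$ whenever $(x,y)\notin F\circ F^{-1}$.

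Next I would assemble these into a single coarse embedding. Fix a countable cofinal family of entourages $E_1\subseteq E_2\subseteq\cdots$ in $\mathcal E$, apply the previous step with $\varepsilon=2^{-n}$ and $E=E_n$ to obtain $\xi^n$ and $F_n$ (arranged to be increasing, so that $G_n:=F_n\circ F_n^{-1}$ increases as well), fix a basepoint $z$, and define
\[ F(x)=\bigoplus_{n=1}^\infty(\xi_x^n-\xi_z^n)\in\bigoplus_{n=1}^\infty\ell^2(X\times\mathbb N). \]
For bornologousness, given $E\in\mathcal E$ cofinality supplies $N$ with $E\subseteq E_n$ for all $n\geq N$, so the tail $\sum_{n\geq N}\|\xi_x^n-\xi_y^n\|_2^2$ is dominated by $\sum_{n\geq N}4^{-n}$ and $\sup_{(x,y)\in E}\|F(x)-F(y)\|<\infty$. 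For properness, if $\|F(x)-F(y)\|\leq R$ then every index $n$ with $(x,y)\notin G_n$ contributes $\|\xi_x^n-\xi_y^n\|_2^2=2$, so there are at most $R^2/2$ such indices; since the $G_n$ increase, the bad indices form an initial segment, so $(x,y)\in G_m$ for some $m\leq\lfloor R^2/2\rfloor+1$, whence $\{(x,y):\|F(x)-F(y)\|\leq R\}\subseteq G_{\lfloor R^2/2\rfloor+1}\in\mathcal E$. These are exactly the control and co-control estimates that make $F$ a coarse embedding, verified just as the uniform expansiveness and effective properness of $F$ in Theorem~\ref{CE}.

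The main obstacle is not any individual estimate but the one genuinely global input: the existence of a countable cofinal family $\{E_n\}$, that is, countable generation of $\mathcal E$ (together with coarse connectedness, so that $\{(x,z)\}\in\mathcal E$ eventually and $F(x)$ lands in the direct sum). This is precisely what permits a \emph{countable} orthogonal direct sum to simultaneously control every entourage and remain proper, and it cannot be dropped: an uncountable set with the minimal (discrete) coarse structure is uniformly locally finite and has property A trivially, yet admits no proper map into a Hilbert space. In the setting of Roe's theorem this countability is part of the standing hypotheses, and---crucially for the present paper---it is automatic for the bounded coarse structure $\mathcal E_b$ of a fuzzy metric space: every pair is bounded because $M(x,y,t)>0$, so $X$ is coarsely connected, while the balls $B(x,r_n,t_n)$ with $r_n\uparrow1$ and $t_n\uparrow\infty$ furnish a countable cofinal family. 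This is exactly why Theorem~\ref{CE} can be recovered from the coarse-space statement through the dictionary developed in this appendix.
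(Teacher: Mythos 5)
Your main construction is correct and is essentially the approach the paper itself takes: for this coarse-space statement the paper offers no proof, only the citation to Roe, and the closest in-paper argument is Theorem \ref{CE} (via Lemma \ref{lem:Ace}), whose direct-sum map $F(x)=\bigoplus_n(\xi_x^n-\xi_z^n)$, inner-product estimate $\langle\xi_x,\xi_y\rangle>2/(2+\varepsilon^2)$, orthogonality outside $F\circ F^{-1}$, and counting of ``bad'' indices you transport faithfully, with the initial-segment refinement for effective properness being a correct addition. You are also right that the single-map construction genuinely consumes a countable cofinal family of entourages together with coarse connectedness, and that both are automatic for the bounded coarse structure $\mathcal{E}_b$ of a fuzzy metric space.

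The one genuine flaw is in your necessity claim: an uncountable set with the minimal coarse structure does \emph{not} witness that countable generation cannot be dropped. That structure is generated by the diagonal alone, hence is countably generated; what it lacks is connectedness. Indeed, already a two-point set with the minimal coarse structure admits no proper map into Hilbert space (its only bounded sets are singletons, while any two-point subset of $H$ is bounded), so your example isolates connectedness, and uncountability plays no role in it. A correct witness for countable generation is the \emph{connected} structure $\mathcal{E}=\{E\subseteq X\times X:E\setminus\Delta\ \text{finite}\}$ on an uncountable set $X$: it is uniformly locally finite; it has property A in the sense of Definition \ref{Acoarsesp} (given $E$, let $S$ be the finite set of points carrying $E\setminus\Delta$, take $A_x=S\times\{1\}$ for $x\in S$ and $A_x=\{(x,1)\}$ otherwise, with $F=\Delta\cup(S\times S)$); it is not countably generated, since any countable family of generators produces off-diagonal pairs only within a countable set of points; and it admits no coarse embedding into any Hilbert space, since effective properness would force $\{(x,y):\|f(x)-f(y)\|\leq R\}$ to have finite off-diagonal part for every $R$, yet these sets exhaust the uncountable off-diagonal of $X\times X$ as $R$ runs over $\mathbb{N}$, and a countable union of finite sets is countable. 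Finally, note that the citation ``Theorem 11.16(c) and Lemma 11.37'' indicates that Roe's own route passes through a kernel-type characterization of embeddability rather than assembling one map by hand, which is one way such countability issues are kept out of the statement; your version, mirroring Theorem \ref{CE}, is the one that needs the standing hypotheses you identified.
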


When applied to a uniformly locally finite fuzzy metric space with its bounded coarse structure, these correspond to Theorems  \ref{thm:asdim} and \ref{CE}.

\bibliographystyle{plain}
\bibliography{mybib}

\end{document}